\newif\ifarxiv
\arxivtrue
\ifarxiv
    \documentclass[a4paper]{article}
    \usepackage[margin=2.5cm]{geometry}
    \newcommand{\headers}[2]{\pagestyle{myheadings}\markboth{\expandafter{#2}}{\expandafter{#1}}}
    \newcommand\funding[1]{\protect\\ \hspace*{15.37pt}{\bfseries Funding:} #1}
    \newcommand{\email}[1]{\protect\href{mailto:#1}{#1}}
    \usepackage{comment}
    \excludecomment{keywords}
    \excludecomment{MSCcodes}
    \usepackage{amsthm}
    \usepackage{amsmath}
    \usepackage{aliascnt}
    \usepackage{hyperref}
    \usepackage{cleveref}
    \newtheorem{theorem}{Theorem}[section]
    \newaliascnt{lemma}{theorem}
    \newtheorem{lemma}[lemma]{Lemma}
    \aliascntresetthe{lemma}
    \newaliascnt{corollary}{theorem}
    \newtheorem{corollary}[corollary]{Corollary}
    \aliascntresetthe{corollary}
    \newaliascnt{proposition}{theorem}
    \newtheorem{proposition}[proposition]{Proposition}
    \aliascntresetthe{proposition}
    \newaliascnt{definition}{theorem}
    \newtheorem{definition}[definition]{Definition}
    \aliascntresetthe{definition}
    \crefname{theorem}{theorem}{theorems}
    \Crefname{theorem}{Theorem}{Theorems}
    \crefname{lemma}{lemma}{lemmas}
    \Crefname{lemma}{Lemma}{Lemmas}
    \crefname{corollary}{corollary}{corollaries}
    \Crefname{corollary}{Corollary}{Corollaries}
    \crefname{proposition}{proposition}{propositions}
    \Crefname{proposition}{Proposition}{Propositions}
    \crefname{definition}{definition}{definitions}
    \Crefname{definition}{Definition}{Definitions}

\else
  \documentclass[review,hidelinks,onefignum,onetabnum]{siamart250211}
\fi

\ifarxiv
  \newcommand{\maybemath}[1]{\[#1\]}
\else
  \newcommand{\maybemath}[1]{$#1$}
\fi

\usepackage[utf8]{inputenc}
\usepackage{graphicx} 
\usepackage[export]{adjustbox}

\usepackage{amssymb}
\usepackage{amsfonts}
\usepackage{amsmath}

\usepackage{hyperref}
\usepackage{cleveref}
\usepackage[dvipsnames]{xcolor}
\usepackage{ifthen}

\newcommand{\Sm}{\mathcal{S}^m}
\newcommand{\Stwo}{\mathcal{S}^2}

\usepackage{xspace}

\newboolean{final}
\setboolean{final}{true}
\setboolean{final}{false}
\ifthenelse{\boolean{final}}{\newcommand{\todo}[1]{}}
{\newcommand{\todo}[1]{{\color{red}TODO: #1}}}
\ifthenelse{\boolean{final}}{\newcommand{\anne}[1]{}}{\newcommand{\anne}[1]{{\color{magenta}Anne: #1}}}
\ifthenelse{\boolean{final}}{\newcommand{\luka}[1]{}}{\newcommand{\luka}[1]{{\color{Green}Luka: #1}}}
\ifthenelse{\boolean{final}}{\newcommand{\dimo}[1]{{}}}{\newcommand{\dimo}[1]{{\color{cyan}Dimo: #1}}} 
\ifthenelse{\boolean{final}}{\newcommand{\tea}[1]{}}{\newcommand{\tea}[1]{{\color{Orange}Tea: #1}}}
\ifthenelse{\boolean{final}}{\newcommand{\new}[1]{{#1}}}{\newcommand{\new}[1]{{\color{blue} #1}}}
\ifthenelse{\boolean{final}}{}{}
\ifthenelse{\boolean{final}}{\newcommand{\del}[1]{{}}}{\newcommand{\del}[1]{{ \color{Tan} #1 } }}

\newcommand{\PS}{{\rm PS}}
\newcommand{\R}{\mathbb{R}}

\newcommand{\ftheta}{\mathcal{F}_\theta}
\newcommand{\Image}{{\rm Im}}
\newcommand{\COBI}{{\textsf{COBI}}\xspace} 
\newcommand{\hide}[1]{}

\DeclareMathOperator{\Proj}{Proj}
\DeclareMathOperator{\Lev}{Lev}
\DeclareMathOperator{\argmin}{argmin}

\DeclareMathOperator{\ND}{ND}

\renewcommand{\Upsilon}{\Psi}

\ifpdf
  \DeclareGraphicsExtensions{.eps,.pdf,.png,.jpg}
\else
  \DeclareGraphicsExtensions{.eps}
\fi

\headers{Pareto Set Characterization and the COBI Problem Generator}{A. Auger, D. Brockhoff, L. Oprav\v{s}, and T. Tu\v{s}ar}

\newcommand{\mytitle}{Pareto Set Characterization in Constrained Multiobjective Optimization and the \COBI{} Problem Generator}
\newcommand{\myfunding}{We acknowledge financial support from the Slovenian Research and Innovation Agency (research core funding No.\ P2-0209 and projects No.\ N2-0254 ``Constrained Multiobjective Optimization Based on Problem Landscape Analysis'' and GC-0001 ``Artificial Intelligence for Science''). This work was also motivated by the COST Action CA22137 ``Randomised Optimisation Algorithms Research Network'' (ROAR-NET), supported by the European Cooperation in Science and Technology.}
\ifarxiv
\title{\mytitle\thanks{\textbf{Funding:} \myfunding}}
\else
\title{Pareto Set Characterization in Constrained Multiobjective Optimization and the \COBI{} Problem Generator
\thanks{Submitted to the editors on \today~\todo{Check date before submission}.
\funding{\myfunding}}}
\fi

\ifarxiv
\author{
Anne Auger\textsuperscript{1}
\and
Dimo Brockhoff\textsuperscript{1}
\and
Luka Oprav\v{s}\textsuperscript{2}
\and
Tea Tu\v{s}ar\textsuperscript{2}
\\[1ex]
\textsuperscript{1}Inria, CMAP, CNRS, École Polytechnique, Institut Polytechnique de Paris,\\ Palaiseau, France
\texttt{\href{mailto:firstname.lastname@inria.fr}{firstname.lastname@inria.fr}}\\[0.5ex]
\textsuperscript{2}Jo\v{z}ef Stefan Institute, Ljubljana, Slovenia\\
\texttt{\href{mailto:firstname.lastname@ijs.si}{firstname.lastname@ijs.si}}
}
\date{}
\else
\author{
Anne Auger\thanks{Inria, CMAP, CNRS, École Polytechnique, Institut Polytechnique de Paris, Palaiseau, France (\email{firstname.lastname@inria.fr}).}
\and
Dimo Brockhoff\footnotemark[2]
\and
Luka Oprav\v{s}\thanks{Jo\v{z}ef Stefan Institute, Ljubljana, Slovenia (\email{firstname.lastname@ijs.si}).}
\and
Tea Tu\v{s}ar\footnotemark[3]
}
\fi

\usepackage{amsopn}

\ifpdf
\hypersetup{
  pdftitle={Pareto Set Characterization in Constrained Multiobjective Optimization and the \COBI{} Problem Generator},
  pdfauthor={Anne Auger, Dimo Brockhoff, Luka Opravs and Tea Tusar}
}
\fi

\begin{document}

\maketitle

\begin{abstract}
Benchmark problems play a central role in assessing the performance of numerical optimization algorithms. However, many existing constrained multiobjective optimization benchmark problems rely on overly restricted constructions or lack formal analysis of their optimal solution sets, limiting their relevance for systematic algorithm evaluation.
In this work, we introduce a class of analytically tractable constrained multiobjective optimization problems whose Pareto sets can be formally characterized. The construction is based on convex-quadratic functions with positive definite Hessians, combined through multipeak formulations in which each objective is defined as the minimum over several convex-quadratic components. This approach preserves analytical structure while enabling multimodality (non-convexity), ill-conditioning and non-separability. The constraints are built as sublevel sets of multipeak functions giving rise to problems with potentially disconnected feasible regions. 
Building on these results, we propose \COBI{}, a scalable generator of constrained bi-objective test problems designed for benchmarking derivative-free optimization algorithms. 
We provide a reference Python implementation that enables straightforward integration of \COBI{} instances into benchmarking workflows.
\end{abstract}

\begin{keywords}
Multiobjective optimization, constrained optimization, benchmarking
\end{keywords}

\begin{MSCcodes}
90C29: Multi-objective and goal programming
\end{MSCcodes}

		\crefname{proposition}{proposition}{propositions}
		\Crefname{proposition}{Proposition}{Propositions}

\section{Introduction}
Test problems are essential to evaluate the convergence of numerical optimization algorithms as well as for benchmarking them~\cite{more2009benchmarking}. In order to ensure that performance on test problems generalizes to real-world applications, these problems should capture the key challenges encountered in those applications---a design rationale that has been followed, for example, for the design of the test functions of the COmparing Continuous Optimizers (COCO) platform~\cite{hansen2021coco}. At the same time, problems should have well-understood properties that allow for systematic analysis, performance quantification and diagnosis of algorithmic weaknesses. In this context, a fundamental prerequisite for analyzing any \emph{multiobjective} numerical optimization (MO) algorithm on a test problem is the knowledge of the optimal solution set, known as the \emph{efficient set} or \emph{Pareto set}, and its image in the objective space, the \emph{Pareto front}. Without access to these sets, it is difficult and often impossible to determine whether an algorithm is converging to Pareto-optimal solutions or to meaningfully assess its performance quantitatively. 
Equally important, however, is that the structure and location of the Pareto set and front are not only known but also theoretically justified. In some cases, test problems have been introduced under the assumption of certain properties (such as a degenerate Pareto front) that were later (empirically) shown to be inaccurate~\cite{ishibuchi2015pareto}. This underlines the need for test problems that not only present meaningful challenges and have known optima, but whose Pareto sets and fronts are grounded in formal theoretical analysis.

Despite the importance of well-designed test problems, the field of constrained multiobjective optimization (CMO) lacks robust benchmarks with both theoretical foundations and practical relevance. While existing test problems often have known optima, they tend to be overly restricted and lack formal analysis. Our primary motivation is to address this gap by introducing CMO test problems with mathematically characterized Pareto sets that reflect key difficulties encountered in real-world scenarios. By a \emph{known} Pareto set, we mean one that is analytically characterized and efficiently computable.

To achieve this, we build on a class of objective functions central to numerical optimization: convex-quadratic functions with positive definite Hessians. These functions can exhibit ill-conditioning and non-separability---two fundamental challenges in both local and global optimization. They have historically served as building blocks in the design of major optimization algorithms, from quasi-Newton methods to the Covariance Matrix Adaptation Evolution Strategy (CMA-ES) family in stochastic optimization.
Multipeak formulations, where each objective is defined as the minimum over several convex-quadratic `peak' functions, have been used to introduce multimodality and non-convexity into otherwise well-structured problems~\cite{toure2019bi,glasmachers2019challenges}. As we will prove, this construction retains analytical tractability while enabling rich objective landscapes with multiple attraction basins.

By pairing these multipeak objective functions with combinations of linear and convex-quadratic constraints (which can result in multi-modal constraints and disconnected feasible sets), we obtain a broad class of constrained multiobjective optimization problems. For this class, we derive an explicit characterization of the Pareto set.
Additionally, by exploiting structural invariance properties of the Pareto set---specifically its invariance under sign-preserving transformations of inequality constraints, zero-only-at-zero transformations of equality constraints and strictly increasing transformations of the objective functions---we extend this characterization to a substantially larger family of problems.

On this basis, we introduce \COBI{}, a scalable test problem generator for derivative-free COnstrained BI-objective optimization. \COBI{} captures real-world challenges such as non-separable, ill-conditioned, multimodal, non-smooth and discontinuous objectives, and disconnected feasible regions. Unlike many existing benchmarks, it is grounded in formal mathematical results that precisely characterize the structure and location of the Pareto set and front and allows for their efficient approximation via conic optimization splitting solvers. We also provide a ready-to-use implementation of the resulting problems to facilitate benchmarking.

In summary, this work makes the following contributions:
(a) A mathematical characterization of Pareto sets in constrained multiobjective problems composed of strictly convex-quadratic functions, extended to multipeak constructions with non-convex feasible regions.
(b) Proof of invariance of the resulting Pareto sets under strictly increasing transformations of the objectives and sign-preserving transformations of the constraints, thereby broadening the class of analytically tractable problems.
(c) A formal definition of the scalable \COBI{} test problem generator, along with methods for numerically approximating its Pareto sets.
(d) A reference Python implementation\footnote{\label{foot:cobi}\url{https://github.com/numbbo/cobi-problem-generator/}} that enables seamless integration of \COBI{} instances into benchmarking workflows.
(e) Visualization and analysis of key problem features, including non-separability, ill-conditioning and multimodality in both objectives and constraints—that reflect challenges in real-world scenarios.

Note that we consider a derivative-free optimization setting, as our primary goal is to construct benchmark problems for evaluating derivative-free solvers. Nevertheless, if the transformations applied to the objectives are omitted or chosen so as to preserve differentiability, the resulting problem instances are compatible with gradient-based optimization methods.

\vspace*{-1em}
\subsection*{Related work}
Most CMO benchmark problems were designed to have a known Pareto front and often also a known Pareto set~\cite{fan2020difficulty,liang2022survey,liu2019handling,ma2019evolutionary,to2017a,zhou2020constrained}. By construction, however, they rarely exhibit difficulties observed in practice, such as ill-conditioning or Pareto sets not aligned with the coordinate axes. In fact, their variables are typically artificially partitioned into distance and position variables\footnote{A \emph{distance variable} never yields non-comparable solutions when varied alone. A \emph{position variable} yields only non-dominated (or equivalent) solutions when varied alone~\cite{huband2005scalable}.}, which introduces biases in algorithm evaluation and limits their representativeness of real-world optimization landscapes~\cite{ishibuchi2023performance}. Moreover, constraints are often defined solely by specifying infeasible regions in the objective space without a clear effect on the Pareto set. 

Another class of CMO benchmark problems with known, though not formally characterized, Pareto sets consists of distance-based problems with constraints \cite{fieldsend2021visualizable}. While designed to be easily visualizable, their Pareto sets exhibit a highly regular structure (in the bi-objective case, they consist of simple line segments), and three of the four types of added constraints do not introduce optimal solutions beyond those of the unconstrained problem. 

More complex problems can be constructed by combining single-objective functions with known properties. However, even without constraints, this typically yields Pareto sets with intricate structures that are not available in closed form or theoretically characterized. A prominent example is the {\ttfamily bbob-biobj} suite of the \href{https://coco-platform.org}{COCO platform}~\cite{brockhoff2022using,hansen2021coco}. The same holds for most real-world-inspired test problems~\cite{kumar2021benchmark,tanabe2020re}.

For the special case of convex objective functions subject to linear or convex constraints, numerous algorithmic approaches exist (see, e.g., \cite{ruzika2005approximation} for an overview of early methods). They typically exploit the fact that minimizing a weighted sum of the objectives yields Pareto-optimal solutions and that every Pareto-optimal solution can be obtained as the optimum of a weighted-sum scalarization for an appropriate choice of weights~\cite{jahn2009a}.

Analytical characterizations of the entire Pareto set of concrete constrained problems, however, are rare. For instance, \cite{qi2025analytical} analyzes the Pareto set of a portfolio optimization problem with one quadratic and several linear objectives under specific constraints. To the best of our knowledge, there is no theoretical analysis of the Pareto sets of constrained problems with multi-modal objectives, in particular when the objectives are defined as minima of strictly convex-quadratic subfunctions.

To address this gap, we build on the multipeak construction paradigm, where each objective is defined as the minimum of several strictly convex-quadratic functions. The combination of basic functions with elliptical sublevel sets as a single-objective test problem was introduced in~\cite{preuss2004importance}, refined with an exponential decrease of the objective function in~\cite{gallagher2006general} and further developed as Multi-Peaks~2 in~\cite{wessing2015multiple}. These constructions were subsequently extended to the multiobjective setting in~\cite{kerschke2019search,schaepermeier2023multipeaks}. All of these multimodal benchmark problems were developed only in the unconstrained setting.

\subsection*{Paper organization}
Section~\ref{sec:mathBG} introduces the mathematical background for CMO, followed by the main theoretical results in Section~\ref{sec:main-theory}, including the characterization of Pareto sets for constrained convex-quadratic problems, their extension to multipeak settings and key invariance properties. Section~\ref{sec:testfunctiongenerator}  presents the \COBI{} problem generator, Sections~\ref{sec:approximations} and \ref{sec:visualinspection} describe Pareto set approximation methods and analyze the resulting \COBI{} problem properties. Section~\ref{sec:perfassessment} demonstrates their use in benchmarking.

\subsection*{Notations}
Given a symmetric matrix $A \in \R^{n\times n}$, we write $A \succeq 0$ when $A$ is positive semi-definite, and $A \succ 0$ when $A$ is positive definite. We denote $\R^m_+ = \{ y \in \R^m, y_i \geq 0\}$ and the unit simplex of $\R^m$ by $\Sm = \{ \theta \in \R^m,\mbox{ with }
\theta_i \geq 0, \sum_i \theta_i =1  \}
$. For $m=2$, the simplex is denoted by $\Stwo$. We denote by $\Image(f)$ the image of a function $f: \R^n \to \R$, i.e., the set of $y \in \R$ such that there exists $x \in \R^n$ such that $f(x) = y$. Given two sets $A$ and $B$, $A \subset B$ means that all elements of $A$ are included in $B$. Thus, $A$ and $B$ can also be equal.

\section{Mathematical background}\label{sec:mathBG}
We present in this section the mathematical background of constrained multiobjective optimization and the different mathematical tools and concepts on which our proofs are based.

\subsection{Problem formulation, optimality and feasibility notations}
We consider an abstract numerical constrained multiobjective optimization problem with $m \in \mathbb{N}$ objectives, $p$ inequality constraints and $q$ equality constraints 
\begin{equation}
\begin{aligned}\label{eq:MO-constrained}
 \mbox{minimize} \quad f(x) &= \left( f_1(x),\ldots,f_m(x) \right)\\
 \mbox{subject to} \quad  g_i(x) &\leq 0, i=1, \ldots, p ; \,\,
 h_j(x) = 0, 
 j=1,\ldots, q
 \end{aligned}
\end{equation}
with numerical objectives $f_i: \R^n \to \R$,
 inequality constraints $g_i: \R^n \to \R$  and equality constraints $h_j: \R^n \to \R$. 
The \emph{feasible set} is defined as the set of all solutions of the search space $\R^n$ that satisfy the constraints.
We denote by $C_i = \{ x \in \R^n, \mbox{ such that } g_i(x) \leq 0\}$ and $\tilde{C}_j = \{ x \in \R^n, \mbox{ such that } h_j(x) = 0 \} $ the feasible sets associated to the constraints $g_i$ and $h_j$, respectively. The overall feasible set equals
 $  C = \Big(\bigcap\nolimits_{i=1}^p C_i\Big) \cap \left(\bigcap\nolimits_{j=1}^q \tilde{C}_j\right) .
$
%
When $p=0$, by convention $\bigcap_{i=1}^p C_i = \R^n$, and similarly, if $q=0$, $\bigcap_{j=1}^q \tilde{C}_j = \R^n$. 
The feasible set associated to an inequality constraint corresponds to the \emph{sublevel set} of the constraint with value zero where we remind that the sublevel set of a function $g$ with value $\alpha \in \R$ is defined as
$
\Lev^\alpha_{\leq}(g) = \{ x \in \R^n | g(x) \leq \alpha  \}.
$
In a compact way, we will also denote the problem \eqref{eq:MO-constrained} by 
$
(f,g,h) \mbox{ or } (f,C) .
$
When the constraints are convex and lower semicontinuous\footnote{We remind that in a metric space, a function is lower semicontinuous  in $x_0$ if for every sequence $(x_t)_{t \in \mathbb{N}}$ that converges to $x_0$ such that $f(x_t) \to y$, we have $f(x_0) \leq y$.}, the feasible set is convex, as an intersection of convex sets (the sublevel sets of convex functions), and closed as an intersection of closed sets (the sublevel sets of lower semicontinuous functions are closed) as formalized in the next lemma. The proof is immediate and therefore omitted. 
\begin{lemma}\label{lem:convex-feasible}
Assume the constrained problem \eqref{eq:MO-constrained} with $p \geq 0$ inequality constraints that are convex and lower semicontinuous and $q \geq 0$ linear equality constraints. The feasible set $C$  is closed and convex.
\end{lemma}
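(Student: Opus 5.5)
The plan is to write $C$ as an intersection of finitely many sets, show each of them is closed and convex, and then use that both properties survive arbitrary intersections. Recall
$C = \bigl(\bigcap_{i=1}^p C_i\bigr)\cap\bigl(\bigcap_{j=1}^q \tilde C_j\bigr)$, where $C_i = \Lev^0_{\leq}(g_i)$ and $\tilde C_j = \{x : h_j(x)=0\}$. When $p=0$ or $q=0$, the corresponding intersection is $\R^n$ by convention, which is closed and convex. So the empty cases need no separate treatment.

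\emph{Inequality constraints.} Convexity of $C_i$ is immediate. If $x,y\in C_i$ and $\lambda\in[0,1]$, convexity of $g_i$ gives $g_i(\lambda x+(1-\lambda)y)\le \lambda g_i(x)+(1-\lambda)g_i(y)\le 0$.

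For closedness, I use the sequential definition of lower semicontinuity from the footnote. Take $x_t\in C_i$ with $x_t\to x_0$. Every value satisfies $g_i(x_t)\le 0$, but the sequence $(g_i(x_t))$ need not converge. This is the one slightly delicate point.

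To handle it, pass to a subsequence along which $g_i(x_t)$ converges to $\liminf_t g_i(x_t) =: y$, an element of $[-\infty,0]$. If $y$ is finite, lower semicontinuity at $x_0$ gives $g_i(x_0)\le y\le 0$. If $y=-\infty$, the usual $\liminf$ form of lower semicontinuity yields a contradiction with $g_i(x_0)\in\R$. That case cannot even occur here, since a convex real-valued function on $\R^n$ is continuous. Either way $g_i(x_0)\le 0$, so $x_0\in C_i$ and $C_i$ is closed.

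\emph{Equality constraints.} A linear equality constraint is an affine map $h_j(x)=a_j^\top x - b_j$. Its zero set $\tilde C_j$ is therefore a hyperplane (or $\R^n$, or $\emptyset$ in degenerate cases). Each of these is convex. Each is also closed, as the preimage of the closed set $\{0\}$ under a continuous map.

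\emph{Conclusion.} An intersection of convex sets is convex: any two points lie in every set, so their segment lies in every set. An intersection of closed sets is closed, since its complement is a union of open sets. Hence $C$ is closed and convex. The only step needing any care is the convergence issue in the closedness argument, which the subsequence (or $\liminf$) treatment resolves.
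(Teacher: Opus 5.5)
Your proof is correct and follows the same route as the paper's omitted argument. You write $C$ as an intersection of the sublevel sets of the $g_i$ and the zero sets of the $h_j$, show each of these is closed and convex, and conclude because both properties are preserved under intersection. Your extra care with the sequential definition of lower semicontinuity is a valid refinement of a step the paper simply asserts: the subsequence argument, together with the observation that a real-valued convex function on $\R^n$ is continuous anyway. The paper obtains convexity of $\tilde C_j$ by writing it as $\{h_j\le 0\}\cap\{-h_j\le 0\}$ instead of identifying it as a hyperplane, which is equivalent.
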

\hide{
\begin{proof}
The feasible set is the intersection of the sublevel sets $C_i$ which are convex since $g_i$ is convex and closed since $g_i$ is lower semicontinuous. Hence $\left(\bigcap\nolimits_{i=1}^p C_i\right)$ is closed (resp. convex) as intersection of closed (resp. convex) sets. The equality constraints $h_j$ are linear and thus continuous since the search space is finite dimensional. Hence $\tilde{C}_j$ is closed as preimage of a closed set by a continuous function. It is also convex as intersection of the sets $\{x, h_j(x) \leq 0\}$ and $\{x, - h_j(x) \leq 0\}$ which are both convex since $h_j$ and $-h_j$ are linear functions and thus convex. Then $\left(\bigcap\nolimits_{j=1}^q \tilde{C}_j\right)$ is convex and closed as intersection of closed and convex sets. Overall $C$ is closed and convex.
\end{proof}
}

\noindent Given a function $f:\R^n \to \R^m$, a preorder\footnote{A preorder or quasiorder is a binary relation that is reflexive and transitive but not necessarily antisymmetric. We cannot assume the latter here due to the possibility of two solutions $x,y\in\R^n$ being mapped to the same vector $f(x)=f(y)\in\R^m$.} on feasible solutions can be defined by the  \emph{weak dominance relation} $x \preceq_f y$, where 
\begin{equation}
    x \preceq_f y \mbox{ iff } \mbox{for all } i=1,\ldots,m: f_i(x) \leq f_i(y) .
\end{equation}

\noindent We will say $x$ is \emph{better than or equal} to $y$ when $x \preceq_f y$, and \emph{better than} $y$ when $x \preceq_f y$ and $f_j(x) < f_j(y)$ for some $j$. In the latter case, $x$ \emph{dominates} $y$ and we write $x \prec_f y$. 
We simplify the notation from $x \preceq_f y$ to $x \preceq y$ and from $x \prec_f y$ to $x \prec y$ when $f$ is the objective function of the considered optimization problem.

\begin{definition}{\cite[Definition~11.3]{jahn2009a}}
A solution $x\in\R^n$ is called Pareto-optimal, or efficient, for the problem~\eqref{eq:MO-constrained} if it is feasible, i.e., $ x \in C$, and there is no $y \in C$ such that $y \prec x$.
Equivalently, $x$ is Pareto-optimal if it is feasible and for any feasible $y$ with $y \preceq x$, it holds that $f(y)=f(x)$.
%
The set of Pareto-optimal solutions is referred to as the Pareto set and denoted by $\PS^{(f,C)}$. The image under $f$ of solutions from the Pareto set is called the Pareto front.\footnote{This terminology follows the one used in \cite{custodio2011direct} while different terms are used elsewhere, e.g., \cite{sawaragi1985theory}.}
\end{definition}

\noindent We also introduce the notion of weak Pareto optimality.
\begin{definition}{\cite[Definition~11.5]{jahn2009a}}
A solution $x\in\R^n$ is weakly Pareto-op\-ti\-mal, or weakly efficient, for the problem~\eqref{eq:MO-constrained}, if it is feasible and there is no $y \in C$ with $f_i(y) < f_i({ x})$ for all $i=1, \dots, m$.
\end{definition}
\noindent A Pareto-optimal solution is also weakly Pareto-optimal but the reverse is not true in general. When the objectives are strictly convex, then the reverse is true (see Lemma~\ref{lem:stric-convex-WPOisPO}).
The set of Pareto-optimal solutions of the unconstrained problem is referred to as \emph{unconstrained Pareto set} and denoted by $\PS^{(f,\R^n)}$. 
A feasible solution that belongs to the unconstrained Pareto set is Pareto-optimal also for the constrained problem, i.e., $
\PS^{(f,\R^n)} \cap C \subset \PS^{(f,C)}$ (see \cite[Lemma 2.4]{GECCO2025}).

Following the standard terminology, we define the \emph{ideal point} in the objective space as the vector where each coordinate corresponds to the best possible value for the corresponding objective among all Pareto-optimal solutions, while the \emph{nadir point} gives the worst.
\begin{definition}\label{def:ideal-nadir}
The vector $\left(\inf_{x \in \PS^{(f, C)}}f_1(x), \dots, \inf_{x \in \PS^{(f, C)}}f_m(x)\right)$ is called the ideal point of the problem~\eqref{eq:MO-constrained}. 
The vector $\left(\sup_{x \in \PS^{(f, C)}}\!f_1(x),\dots, \sup_{x \in \PS^{(f, C)}}\!f_m(x)\right)$ is called the nadir point of the problem~\eqref{eq:MO-constrained}.
\end{definition}

\subsection{Auxiliary mathematical tools}
We denote by $f(C) + \R^m_+ $~\cite[Page~299]{jahn2009a} the set $\{ y \in \R^m, y_i \geq f_i(x) \mbox{ for some }  x \in C \mbox{ and all } i \in \{ 1, \ldots, m \} \}$.
If all $f_i$ are convex and the set $C$ is convex, then $f(C) + \R^m_+$ is convex. 
\begin{lemma}\label{lem:fconvex}{(\cite[Page~180]{boyd2004convex} and \cite[Lemma 3.1]{GECCO2025} for a proof)} 
Assume that the objective functions $f_i, i=1, \dots, m$, are convex and the set $C$ is a convex subset of $\R^n$. Then the set $
f(C) + \R^m_+ $ is a convex subset of $\R^m$.
\end{lemma}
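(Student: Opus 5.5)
The plan is to verify convexity directly from the definition. Take two points $y, y' \in f(C) + \R^m_+$ and $\lambda \in [0,1]$, and show that $\lambda y + (1-\lambda) y'$ again lies in $f(C) + \R^m_+$.

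By definition of the set there exist $x, x' \in C$ with $y_i \geq f_i(x)$ and $y'_i \geq f_i(x')$ for all $i \in \{1,\ldots,m\}$. The natural candidate witness for the convex combination is $z = \lambda x + (1-\lambda) x'$. Since $C$ is convex, $z \in C$. For each $i$, convexity of $f_i$ then gives
\[
f_i(z) \leq \lambda f_i(x) + (1-\lambda) f_i(x') \leq \lambda y_i + (1-\lambda) y'_i ,
\]
where the second inequality uses $\lambda \geq 0$ and $1-\lambda \geq 0$ together with the componentwise bounds $y_i \geq f_i(x)$ and $y'_i \geq f_i(x')$. This is precisely the statement that $\lambda y + (1-\lambda)y' \in f(C) + \R^m_+$, with $z$ as the witness. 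The set is therefore convex.

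None of the steps is a real obstacle. The one point that needs care is that a single preimage point $z$ must serve all $m$ coordinates simultaneously, as the definition of $f(C)+\R^m_+$ requires. This holds because $z$ is constructed independently of $i$.

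Note also that the sum with $\R^m_+$ is essential. The image $f(C)$ by itself need not be convex, and the upper-orthant slack is exactly what absorbs the gap between $f_i(z)$ and the convex combination of the function values.
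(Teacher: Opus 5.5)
Your proof is correct and is the standard direct argument: the witness $\lambda x + (1-\lambda)x'$ combined with coordinatewise convexity of each $f_i$. The paper does not prove this lemma itself but cites Boyd--Vandenberghe and \cite[Lemma 3.1]{GECCO2025}, where the proof is essentially this same verification, so your approach matches.
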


\noindent We have introduced the notions of Pareto optimality and weak Pareto optimality. While in general not all weakly Pareto-optimal solutions are Pareto-optimal, in the case where the objectives $f_i$ are continuous and strictly convex, weakly Pareto-optimal solutions and Pareto-optimal solutions coincide as stated in the next lemma.
\begin{lemma}{\cite[Lemma~1.3]{attouch2015dynamic}}\label{lem:stric-convex-WPOisPO}
If all objectives $f_i$ are continuous and strictly convex,
 then a solution is Pareto-optimal if and only if it is weakly Pareto-optimal.
\end{lemma}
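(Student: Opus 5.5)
The forward direction holds for any problem: by definition a Pareto-optimal solution is also weakly Pareto-optimal. Indeed, if $x$ is Pareto-optimal and some feasible $y$ had $f_i(y) < f_i(x)$ for all $i$, then $y \preceq x$ and $f(y) \neq f(x)$, so $y \prec x$, a contradiction. All the work is in the converse, which I would prove by contradiction using strict convexity. I would additionally assume, as in the setting where this lemma is used, that the feasible set $C$ is convex (for instance by \Cref{lem:convex-feasible}, or simply $C=\R^n$).

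For the converse, let $x \in C$ be weakly Pareto-optimal and suppose it is not Pareto-optimal. Then there is a feasible $y$ with $f_i(y) \leq f_i(x)$ for all $i$ and $f_j(y) < f_j(x)$ for some $j$. In particular $y \neq x$. Set $z = \tfrac12 (x+y)$; it lies in $C$ by convexity. Strict convexity of each $f_i$ and $x \neq y$ give, for every $i$,
\[
f_i(z) < \tfrac12 f_i(x) + \tfrac12 f_i(y) \leq f_i(x).
\]
So $z$ is feasible and strictly improves every objective, contradicting weak Pareto optimality of $x$. Hence $x$ is Pareto-optimal.

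Continuity is not actually needed in this argument; it only matters if the statement is read with some closure or limiting notion. The main (and only) subtle point is feasibility of the midpoint $z$. This requires $C$ to be convex, or at least to contain the segment $[x,y]$, which is why I would state that hypothesis explicitly. Without it the claim can fail, for example when $C$ consists of two points. Once convexity of $C$ is granted, the remaining step is just the strict-convexity inequality above.
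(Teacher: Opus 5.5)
Your proof is correct. The paper gives no argument for this lemma and simply imports it from \cite[Lemma~1.3]{attouch2015dynamic}, so your midpoint argument is a self-contained replacement rather than a variant of something in the text. Strict convexity turns a dominating feasible $y \neq x$ into the point $z=\tfrac12(x+y)$, which strictly improves every objective and contradicts weak optimality.

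What your route buys is that it makes the needed hypotheses visible. Continuity plays no role, while convexity of the feasible set is indispensable. Take $n=1$, $f_1(t)=t^2$, $f_2(t)=(t-1)^2$ and $C=\{-1,1\}$: the point $-1$ is weakly Pareto-optimal but is dominated by $1$. So the lemma as literally stated in the paper, with no assumption on $C$, is false.

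This does not affect the paper's results. The lemma is only invoked in \Cref{prop:PS_CQ}, where $C=\R^n$, and in \Cref{theo:mainCQ}, where $C$ is convex by \Cref{lem:convex-feasible}. That is exactly the setting you impose, and stating the convexity of $C$ explicitly, as you do, is the right way to formulate the result.
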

\noindent We will characterize Pareto-optimal solutions for strictly convex and continuous objectives. By the previous lemma, they are equivalent to weakly Pareto-optimal solutions. We will thus use the following theorem, which allows us to identify weakly Pareto-optimal solutions as solutions of a weighted sum of the objectives, under the assumption that the set $ f(C) + \R^m_+$ is convex.
\begin{theorem}{\cite[Table~11.5, Page~302]{jahn2009a}}\label{Jahn:scalarizing}
\noindent Consider the minimization of a multiobjective problem  $f=(f_1,\ldots,f_m)$ over a non-empty set $C$. Assume $f(C) + \R^m_+$ is a convex set.
A solution $x^\star$ is weakly Pareto-optimal for this multiobjective problem if and only if there exist  $t_1, \ldots, t_m \geq 0$ with at least one $t_i > 0$ such that $
x^\star \in {\rm argmin}_{x \in C} \sum_{i=1}^m t_i f_i(x)$.\footnote{In the same reference~\cite[Table~11.5, Page~302]{jahn2009a} Pareto-optimal solutions are connected to the optima of scalarized functions in the following way: If $x^\star$ is a Pareto-optimal solution of the multiobjective problem, then there exist $t_1, \ldots, t_m \geq 0$ with at least one $t_i > 0$ such that $
x^\star \in {\rm argmin}_{x \in C} \sum_{i=1}^m t_i f_i(x)$. 
Conversely, let $t_1,\ldots,t_m > 0$ be $m$ strictly positive scalars, and let $x^\star \in {\rm argmin}_{x \in C} \sum_{i=1}^m t_i f_i(x)$, then $x^\star$ is Pareto-optimal for the multiobjective problem.
}
\end{theorem}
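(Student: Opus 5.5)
The statement has two directions. The sufficiency direction is elementary. The necessity direction is a separation argument in $\R^m$ applied to the convex set $f(C)+\R^m_+$ and an open orthant placed at $f(x^\star)$.

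\emph{Sufficiency.} Let $t_1,\ldots,t_m\geq 0$ with some $t_k>0$, and let $x^\star\in{\rm argmin}_{x\in C}\sum_i t_i f_i(x)$. In particular $x^\star\in C$, so it is feasible. Suppose some $y\in C$ satisfies $f_i(y)<f_i(x^\star)$ for all $i$. Multiplying by $t_i\geq 0$ and summing gives $\sum_i t_i f_i(y)\leq \sum_i t_i f_i(x^\star)$. The inequality is strict because of the term with $t_k>0$. This contradicts the minimality of $x^\star$, so $x^\star$ is weakly Pareto-optimal. Convexity is not needed for this direction.

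\emph{Necessity.} Let $x^\star$ be weakly Pareto-optimal and set $K=f(C)+\R^m_+$, which is convex by assumption and non-empty since $C\neq\emptyset$. Define
\[
D=\{y\in\R^m:\ y_i<f_i(x^\star)\text{ for all } i\},
\]
which is non-empty, open and convex. I first show $D\cap K=\emptyset$. If $y\in D\cap K$, then there is $z\in C$ with $f_i(z)\leq y_i<f_i(x^\star)$ for all $i$. This contradicts weak Pareto optimality.

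Next I apply the separation theorem for two disjoint convex sets in $\R^m$, one of them open (a finite-dimensional Hahn--Banach). It gives $t\in\R^m\setminus\{0\}$ and $c\in\R$ with $\langle t,y\rangle\leq c\leq\langle t,k\rangle$ for all $y\in D$ and $k\in K$.

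Two consequences follow from this inequality.
\begin{itemize}
\item \emph{Sign of $t$.} For any $x\in C$ and $s>0$, the point $f(x)+s e_i$ lies in $K$. If some $t_i<0$, then $\langle t,f(x)+se_i\rangle\to-\infty$ as $s\to\infty$, which contradicts the lower bound $c$. Hence $t_i\geq 0$ for all $i$, and since $t\neq 0$ at least one $t_i>0$.
\item \emph{Minimality of $x^\star$.} The point $f(x^\star)$ lies in the closure of $D$ (take $f(x^\star)-\varepsilon\mathbf{1}$ with $\varepsilon\to 0$). By continuity of the linear map, $\langle t,f(x^\star)\rangle\leq c$. Combined with $\langle t,f(x)\rangle\geq c$ for all $x\in C$ (since $f(x)\in K$), this shows $x^\star\in{\rm argmin}_{x\in C}\sum_i t_if_i(x)$.
\end{itemize}

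The main obstacle is justifying the separation step with a non-strict, non-trivial hyperplane: neither set is assumed closed or bounded, so strict separation is unavailable. I rely on the version requiring only disjointness and convexity with one set having non-empty interior (here $D$ is open). The remaining steps are routine limiting arguments.
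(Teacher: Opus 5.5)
Your proof is correct. The sufficiency computation, the separation of the open set $D$ from $f(C)+\R^m_+$, the sign argument along the directions $e_i$, and the closure argument at $f(x^\star)$ together form the standard separation-theorem proof of this scalarization result. The paper gives no proof of its own and simply cites Jahn's book.

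The two claims in the footnote also follow directly from your arguments. A Pareto-optimal solution is weakly Pareto-optimal, so the necessity direction gives the weights. When all $t_i>0$, your summation step turns $f_i(y)\le f_i(x^\star)$ for all $i$, with one strict inequality, into a strict decrease of $\sum_i t_i f_i$, which gives Pareto optimality.
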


\noindent One of our proofs relies on the Hilbert projection theorem~\cite[Section~3.1]{hiriart2004fundamentals} that we remind here for the finite dimensional search space $\R^n$ relevant to our case.
\begin{theorem}\label{theo:proj}
Consider a norm $\| . \|$ deriving from an inner product $\langle.,.\rangle$ in $\R^n$ and a non-empty closed convex set $C \subset \R^n$. For every $x \in \R^n$, there exists a unique $y \in C$ such that 
$
\| x - y \| = \inf_{z \in C} \| x - z \| .
$
The vector $y$ is called the projection of $x$ onto $C$ and denoted by $y=\Proj_{C,\|.\|}(x)$.
In addition, $y$ is equivalently characterized by 
$\langle x-y,z-y\rangle \leq 0 $ for all $ z \in C$.
\end{theorem}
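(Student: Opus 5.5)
The plan is the classical Hilbert-space argument in three steps: existence by a parallelogram/Cauchy-sequence argument, uniqueness from the same identity, and then the variational inequality.

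\textbf{Existence.} Let $d = \inf_{z \in C} \|x - z\|$, which is finite and nonnegative because $C$ is non-empty. Choose a minimizing sequence $(z_k)$ in $C$ with $\|x - z_k\| \to d$. The parallelogram law, valid because the norm derives from an inner product, gives
\[
\|z_k - z_l\|^2 = 2\|x-z_k\|^2 + 2\|x-z_l\|^2 - 4\left\|x - \tfrac{z_k+z_l}{2}\right\|^2 .
\]
By convexity, $\tfrac{z_k+z_l}{2} \in C$, so the last norm is at least $d$. Hence $\|z_k - z_l\|^2 \le 2\|x-z_k\|^2 + 2\|x-z_l\|^2 - 4d^2 \to 0$, and $(z_k)$ is Cauchy. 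Since $\R^n$ is complete, $z_k \to y$. Since $C$ is closed, $y \in C$, and continuity of the norm gives $\|x-y\| = d$.

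Alternatively, in finite dimension existence follows from compactness. Restrict to the set $C \cap \{z : \|x-z\| \le d+1\}$, which is closed and bounded, and minimize the continuous map $z \mapsto \|x-z\|$ on it. I would nevertheless keep the Cauchy argument, because it yields uniqueness for free.

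\textbf{Uniqueness.} If $y, y' \in C$ both attain $d$, apply the same identity with $z_k = y$ and $z_l = y'$. This gives $\|y-y'\|^2 \le 4d^2 - 4d^2 = 0$, so $y = y'$.

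\textbf{Variational characterization.} Suppose first that $y$ is the projection and fix $z \in C$. For $t \in (0,1]$, the point $y + t(z-y)$ lies in $C$ by convexity. Expanding
\[
\|x - y - t(z-y)\|^2 \ge \|x-y\|^2
\]
yields $-2t\langle x-y, z-y\rangle + t^2\|z-y\|^2 \ge 0$. Dividing by $t$ and letting $t \to 0^+$ gives $\langle x-y, z-y\rangle \le 0$.

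Conversely, suppose $y \in C$ satisfies the inequality for all $z \in C$. Then for every $z \in C$,
\[
\|x-z\|^2 = \|x-y\|^2 - 2\langle x-y, z-y\rangle + \|z-y\|^2 \ge \|x-y\|^2 .
\]
So $y$ attains the infimum, and by uniqueness it is the projection.

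\textbf{Main obstacle.} The only delicate step is existence, where closedness of $C$ and completeness of $\R^n$ are essential. The parallelogram bound must use convexity so that the midpoint stays in $C$. Everything else is direct algebra with the inner product.
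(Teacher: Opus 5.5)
Your proof is correct and complete. The parallelogram identity, with the midpoint kept in $C$ by convexity, makes any minimizing sequence Cauchy and also forces uniqueness. Both directions of the variational characterization are handled properly, including the requirement $y\in C$ in the converse and the appeal to uniqueness to conclude that $y$ is \emph{the} projection.

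The paper itself gives no proof of this statement. It only recalls the theorem from Hiriart-Urruty and Lemar\'echal (Section~3.1) as a tool for Theorem~\ref{theo:mainCQ}, so there is no in-paper argument to compare against.

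Your existence argument is the classical Hilbert-space one and works verbatim in infinite dimension. A purely finite-dimensional treatment would more often use the compactness argument you mention: minimize the continuous map $z\mapsto\|x-z\|$ over the compact set $C\cap\{z:\|x-z\|\le d+1\}$. It would then get uniqueness from strict convexity of $z\mapsto\|x-z\|^2$. That route is marginally shorter in $\R^n$ but relies on local compactness.

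In the paper's application the theorem is invoked with $\|\cdot\|_{H_\theta}$, which does derive from $\langle u,v\rangle_{H_\theta}=u^\top H_\theta v$ with $H_\theta\succ 0$. Since all norms on $\R^n$ are equivalent, the closedness of $C$ and the completeness you use do not depend on which such norm is chosen.
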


\subsubsection*{Union and intersection of feasible sets}
We construct test problems with non-convex feasible sets that are the union of (convex) feasible sets. As seen above, a feasible set associated to a single inequality constraint corresponds to the sublevel set of the constraint with value $0$. 
Hence, the union of feasible sets can be represented by a single constraint given by the minimum of the individual constraint values. This representation is formalized in the next lemma.
\begin{lemma}\label{lem:feasible-mp}
Consider a finite number of inequality constraints $g_i: \R^n \to \R$ for $i=1,\ldots,k$ and the associated feasible sets $C_i$. Then, $C = \bigcup_{i=1}^{k} C_i$ is the feasible set associated to the inequality constraint $g(x) = \min(g_1(x),\ldots,g_k(x))$. If the functions $g_i$ are lower semicontinuous, then $g$ is lower semicontinuous and the set $C$ is closed.
\end{lemma}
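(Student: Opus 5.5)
The proof has three parts: identify $C$ as the zero sublevel set of $g$, show $g$ is lower semicontinuous, and deduce that $C$ is closed.

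\emph{Set identity.} A point $x$ lies in $\Lev^0_{\leq}(g)$ iff $\min(g_1(x),\ldots,g_k(x)) \leq 0$. Since the minimum of finitely many reals is attained, this holds iff there is an index $i$ with $g_i(x)\leq 0$, i.e.\ iff $x \in C_i$ for some $i$. Hence $\Lev^0_{\leq}(g) = \bigcup_{i=1}^k C_i = C$.

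\emph{Lower semicontinuity.} I would use the sequential definition recalled in the footnote. Let $x_t \to x_0$ with $g(x_t) \to y$. For each $t$, pick an index $i_t$ with $g(x_t) = g_{i_t}(x_t)$. There are only $k$ indices, so some index $i$ satisfies $i_t = i$ for infinitely many $t$. Along that subsequence we have $x_{t'} \to x_0$ and $g_i(x_{t'}) = g(x_{t'}) \to y$. Lower semicontinuity of $g_i$ then gives $g_i(x_0) \leq y$, and therefore $g(x_0) \leq g_i(x_0) \leq y$.

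\emph{Closedness.} The cleanest route avoids lower semicontinuity of $g$ altogether. Each $C_i$ is closed, because a sublevel set of a lower semicontinuous function is closed: if $x_t \in C_i$ and $x_t \to x_0$, then $\liminf_t g_i(x_t) \leq 0$; extracting a subsequence along which $g_i(x_t)$ converges to a limit $\leq 0$ (possibly $-\infty$), the definition yields $g_i(x_0)\leq 0$. A finite union of closed sets is closed, so $C$ is closed. Alternatively, closedness follows directly from the same sublevel-set argument applied to $g$.

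The only delicate point is the convergence hypothesis in the footnote's definition of lower semicontinuity, which requires $g(x_t)$ to converge. In the sublevel-set argument the values may not converge, or may tend to $-\infty$. I would handle this by passing to a subsequence along which the values converge in $[-\infty,0]$. If the limit is $-\infty$, the standard $\liminf$ form of lower semicontinuity still gives $g_i(x_0)\leq 0$.
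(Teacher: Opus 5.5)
Your proof is correct and follows the paper's: the same set identity, then lower semicontinuity of $g$. The paper only asserts that step, while your pigeonhole-subsequence argument proves it, using finiteness of the family exactly where it is needed (an infimum of infinitely many lower semicontinuous functions need not be lower semicontinuous). The one deviation is that you obtain closedness of $C$ as a finite union of the closed sets $C_i$, rather than, as the paper does, as the sublevel set $\Lev^0_{\leq}(g)$ of the lower semicontinuous $g$ (your stated alternative); your caveat that the footnote's sequential definition must allow the limit $y=-\infty$ for sublevel sets to be closed is a correct refinement the paper passes over.
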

\begin{proof}
The first part is a consequence of the following equality between sets:
\maybemath{\{ x | g(x) \leq 0 \} = \{ x | \min(g_1(x), \ldots, g_k(x)) \leq 0 \} = \{ x | \exists i \in \{1,\ldots,k\}, g_i(x) \leq 0 \}.} 
The lower semicontinuity is preserved if we take the minimum of functions, thus $g$ is lower semicontinuous and the sublevel set $\Lev^0_{\leq}(g)$ is closed (see \Cref{lem:convex-feasible}).
\end{proof}
\noindent We will later define functions given by the minimum of some peak functions as \emph{multipeak} functions. The previous lemma hence shows that the feasible set associated with a multipeak function corresponds to the union of the feasible sets associated with each peak function.
Similarly, the intersection of feasible sets corresponding to constraints $g_i$ is equivalent to a single constraint given by the maximum of the constraints $g_i$.
\begin{lemma}\label{lem:maximumconstraints}
Consider $k$ inequality constraints $g_i: \R^n \to \R$ for $i=1,\ldots,k$ and their associated feasible sets $C_i$.
 Then $C = \bigcap_{i=1}^{k} C_i$ is the feasible set associated with $g(x) = \max(g_1(x),\ldots,g_k(x))$. Therefore, there is equivalence between the multiobjective problem with $k$ inequality constraints $(f,(g_1,\ldots,g_k))$ and the problem with a single inequality constraint $(f,\max(g_1,\ldots,g_k))$.
\end{lemma}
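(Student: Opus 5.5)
The plan mirrors the proof of \Cref{lem:feasible-mp}, with the minimum replaced by the maximum, followed by a short argument that the two problems coincide. The first step is the set identity
\[
\{ x \mid \max(g_1(x),\ldots,g_k(x)) \leq 0 \} = \{ x \mid g_i(x) \leq 0 \mbox{ for all } i \in \{1,\ldots,k\} \} = \bigcap_{i=1}^k C_i .
\]
It follows from the elementary fact that a maximum of finitely many reals is nonpositive if and only if every entry is nonpositive. This identifies $\bigcap_{i} C_i$ as the sublevel set $\Lev^0_{\leq}(g)$ of $g=\max(g_1,\ldots,g_k)$, i.e., as the feasible set associated with the single constraint $g$.

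For the equivalence of the two problems, I would first note that in the formulation \eqref{eq:MO-constrained} the feasible set of $(f,(g_1,\ldots,g_k))$ is by definition $C=\bigcap_{i=1}^k C_i$, intersected with the equality-constraint sets $\tilde C_j$ if any are present. By the identity above, this is exactly the feasible set of $(f,\max(g_1,\ldots,g_k))$, with the same equality constraints. The second ingredient is that Pareto optimality, weak Pareto optimality, the Pareto set $\PS^{(f,C)}$, the Pareto front, and the ideal and nadir points depend on the problem only through the objective $f$ and the feasible set $C$. Hence both problems are the same problem $(f,C)$, and all these objects coincide.

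No step is a genuine obstacle. The only point needing care is to state precisely what ``equivalence'' means: identical feasible sets, and hence identical Pareto sets and fronts. If desired, I would also add the remark that lower semicontinuity is preserved under a finite maximum, so closedness of the feasible set carries over as in \Cref{lem:convex-feasible}. That remark is optional and not needed for the statement.
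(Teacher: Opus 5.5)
Your proof is correct and follows the route the paper intends. The paper gives no separate proof: it presents this lemma as the analogue of \Cref{lem:feasible-mp}, whose proof is your set identity with $\min$ replaced by $\max$, and your observation that Pareto sets depend only on $(f,C)$ is what makes the claimed ``equivalence'' precise.
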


\section{Pareto set geometry of constrained multiobjective problems with convex-quadratic or multipeak objectives and constraints}\label{sec:main-theory} 

In \Cref{sec:invariance}, we prove a general invariance property of Pareto sets for constrained multiobjective problems, which allows us to extend the class of problems for which the Pareto set can be characterized. We then mathematically characterize the Pareto set of strictly convex-quadratic problems with a closed convex feasible set in \Cref{sec:sub-31}. In \Cref{subsec:extension_multipeak}, we extend the characterization to multipeak problems where each objective is a multipeak function built from strictly convex-quadratic functions and/or each inequality constraint is a multipeak function. Finally, \Cref{sec:ideal-nadir} shows how to simply compute the ideal and nadir points for such problems.

\subsection{Invariances}
\label{sec:invariance}
We formalize that the Pareto set of a constrained multiobjective problem is invariant to strictly increasing transformations of the objectives, to sign-preserving transformations of the inequality constraints~\cite{oskar2025gecco} and to zero-only-at-zero transformations of the equality constraints.
We define here a sign-preserving transformation as a function $\tau$ such that $\tau(x)>0$ if and only if $x>0$. Equivalently, $\tau$ is sign-preserving if $\tau(x) \leq 0$ if and only if $x \leq 0$.
We also define a zero-only-at-zero transformation $\tau'$ which is such that $\tau'(x) = 0 $ if and only if $x=0$.
 Given an inequality constraint $g$ and its associated feasible set $C = \{ x | g(x) \leq 0 \}$,  $C$ is unchanged if the constraint is instead $\tau(g(x))$ where $\tau$ is sign preserving. Similarly, an inequality constraint is invariant to zero-only-at-zero transformations. We formalize those invariants in a lemma whose proof is immediate. 
 
\begin{lemma}\label{lem:preserve_feasible}
 Consider an inequality constraint $g: \R^n \to \R$. Let $\tau:\Image(g) \to \R$ be a sign-preserving transformation. The feasible set associated to $g$ is equal to the feasible set associated to $\tau \circ g$, i.e., $\{ x | g(x) \leq 0\} = \{ x | \tau( g(x)) \leq 0 \}$
 Similarly consider an equality constraint $h: \R^n \to \R$. Let $\tau':\Image(h) \to \R$ be a zero-only-at-zero transformation. Then the feasible sets associated to the equality constraints $h$ and $\tau' \circ h$ are equal, i.e., $\{ x | h(x) = 0\} = \{ x | \tau'( h(x)) = 0 \}$.
 \end{lemma}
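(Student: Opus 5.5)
The plan is to prove both set equalities by double inclusion, working pointwise: fix an arbitrary $x \in \R^n$ and show that $x$ satisfies the original constraint if and only if it satisfies the transformed one. The only facts needed are the definitions of a sign-preserving and a zero-only-at-zero transformation. The one subtlety is that $\tau$ and $\tau'$ are defined only on $\Image(g)$ and $\Image(h)$. So I first check that the compositions $\tau\circ g$ and $\tau'\circ h$ are well defined on all of $\R^n$. This holds because $g(x) \in \Image(g)$ and $h(x) \in \Image(h)$ for every $x$.

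For the inequality constraint, let $x \in \R^n$ and set $s = g(x) \in \Image(g)$. By definition, $\tau$ is sign-preserving means $\tau(s) > 0$ if and only if $s > 0$. Taking the contrapositive of both directions gives $\tau(s) \le 0$ if and only if $s \le 0$, which is the equivalent form already stated in the paper. Hence $g(x) \le 0$ if and only if $\tau(g(x)) \le 0$. Since $x$ was arbitrary, $\{x \mid g(x)\le 0\} = \{x \mid \tau(g(x))\le 0\}$.

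For the equality constraint, let $x \in \R^n$ and set $s = h(x) \in \Image(h)$. The zero-only-at-zero property gives directly $\tau'(s) = 0$ if and only if $s = 0$. Therefore $h(x)=0$ if and only if $\tau'(h(x))=0$, and the two feasible sets coincide.

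There is no genuine obstacle here. The only point needing care is the domain restriction of $\tau$ and $\tau'$ to the images, so the properties "$\tau(s)>0 \iff s>0$" and "$\tau'(s)=0 \iff s=0$" should be read as holding for $s$ in those images. That is exactly where they are applied.
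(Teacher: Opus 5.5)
Your proof is correct and takes the same approach as the paper. The paper leaves the proof as immediate, relying on exactly the pointwise equivalences $\tau(s)\le 0 \iff s\le 0$ and $\tau'(s)=0 \iff s=0$ for $s$ in $\Image(g)$ and $\Image(h)$, as you do.
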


\noindent We now state two equivalences for strictly increasing functions needed later.
\begin{lemma}\label{lem:monotone}
Let $f: \R^n \to \R$ be an objective function and $\varphi: \Image(f) \to \R$  a strictly increasing function. Consider $z_1$ and $z_2 $ in $\R^n$. Then $f(z_1) \leq f(z_2)$ if and only if $\varphi ( f(z_1)) \leq \varphi ( f(z_2))$. Additionally, $f(z_1) < f(z_2)$ if and only if $\varphi ( f(z_1)) < \varphi ( f(z_2))$.
\end{lemma}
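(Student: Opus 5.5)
The plan is to prove the two equivalences directly from the definition of a strictly increasing function. The one point to keep in view is that $\varphi$ is only defined on $\Image(f)$. This causes no difficulty, since both $f(z_1)$ and $f(z_2)$ lie in $\Image(f)$, so every expression $\varphi(f(z_i))$ is well defined. Throughout, write $a = f(z_1)$ and $b = f(z_2)$. The claims then become: for $a,b \in \Image(f)$, $a \le b \iff \varphi(a) \le \varphi(b)$, and $a < b \iff \varphi(a) < \varphi(b)$.

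I would prove the strict equivalence first. The forward direction $a<b \Rightarrow \varphi(a)<\varphi(b)$ is exactly the definition of strict monotonicity. For the converse, argue by contraposition. Suppose $a \not< b$, that is, $a \ge b$. Either $a=b$, which gives $\varphi(a)=\varphi(b)$, or $a>b$, which gives $\varphi(a)>\varphi(b)$ by strict monotonicity. In both cases $\varphi(a) \not< \varphi(b)$, so $\varphi(a)<\varphi(b)$ forces $a<b$.

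The non-strict equivalence then follows by negating the strict one with the roles of $z_1$ and $z_2$ swapped. By totality of the order on $\R$, $a \le b$ holds if and only if $b<a$ fails. By the strict equivalence applied to $(z_2,z_1)$, $b<a$ holds if and only if $\varphi(b)<\varphi(a)$. Hence $a\le b$ holds if and only if $\varphi(b)<\varphi(a)$ fails, which is the same as $\varphi(a)\le\varphi(b)$.

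No step is a genuine obstacle. The only subtle point is making sure the backward implications use the total order of $\R$ (trichotomy) rather than any invertibility or continuity of $\varphi$, which is not assumed. Handling the converse via trichotomy, as above, avoids needing an inverse function.
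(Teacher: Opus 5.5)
Your proof is correct. The paper gives no proof of this lemma; it calls the proof straightforward and leaves it to the reader, and your argument is the routine one it has in mind: the strict equivalence follows from the definition plus trichotomy, and the non-strict one from negating the strict equivalence with $z_1,z_2$ swapped. Your observation that only the total order on $\R$ is needed, not invertibility or continuity of $\varphi$, also agrees with the paper's remark that strict monotonicity is what makes the converse implications hold.
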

\noindent The proof is straightforward and left to the reader. Remark, however, that the strict monotonicity is needed to obtain the equivalence, otherwise only the first implication of the first statement holds.
We can now state the invariance property of a constrained multiobjective problem with regard to strictly increasing transformations of the objectives, sign-preserving transformations of the inequality constraints and  zero-only-at-zero transformations of the equality constraints.
\begin{proposition}\label{prop:transformations_invariant}
Consider a constrained multiobjective problem $(f,g,h)$, where $f=(f_1,\ldots,f_m)$, $g=(g_1,\ldots,g_p)$ and $h=(h_1,\ldots,h_q)$ denote, respectively, vectors of objective functions, inequality constraints and equality constraints.
Consider strictly increasing functions $\{\varphi_i: \Image(f_i) \to \R, i=1,\ldots,m\}$, sign-preserving transformations $\{ \tau_i:\Image(g_i) \to \R, i=1,\ldots, p\}$ and zero-only-at-zero transformations $\{ \tau_i':\Image(g_i') \to \R, i=1,\ldots, q\}$. The original constrained multiobjective problem $(f,g,h)$ and the transformed problem    \begin{equation}\label{eq:transformed}     ((\varphi_1(f_1),\ldots,\varphi_m(f_m)), (\tau_1(g_1),\ldots,\tau_p(g_p)),(\tau'_1(h_1),\ldots,\tau'_q(h_q)))
     \end{equation}
     \noindent have the same Pareto set. In other words, the solution set of a multiobjective problem is invariant to strictly increasing transformations of the objectives, sign-preserving transformations of the inequality constraints and zero-only-at-zero transformations of the equality constraints.
\end{proposition}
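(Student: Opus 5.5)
The plan is to show the two problems have the same feasible set and the same dominance relation on it; the definition of Pareto optimality then gives equal Pareto sets.

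\emph{Step 1 (feasible sets).} Write $C$ for the feasible set of $(f,g,h)$ and $C'$ for that of the transformed problem \eqref{eq:transformed}. Apply \Cref{lem:preserve_feasible} to each constraint separately:
\begin{itemize}
\item $C_i = \{x \mid g_i(x)\le 0\} = \{x \mid \tau_i(g_i(x))\le 0\}$ for $i=1,\ldots,p$;
\item $\tilde C_j = \{x \mid h_j(x)=0\} = \{x \mid \tau'_j(h_j(x))=0\}$ for $j=1,\ldots,q$.
\end{itemize}
Intersecting these sets gives $C = C'$. This also covers the conventions $p=0$ or $q=0$, where the empty intersection is $\R^n$. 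I read the domains $\Image(g_i')$ in the statement as $\Image(h_i)$, which is what \Cref{lem:preserve_feasible} needs.

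\emph{Step 2 (objectives).} Set $\tilde f = (\varphi_1\circ f_1,\ldots,\varphi_m\circ f_m)$. For $x,y\in\R^n$, \Cref{lem:monotone} applied coordinatewise gives
\begin{itemize}
\item $f_i(y)\le f_i(x) \iff \varphi_i(f_i(y))\le\varphi_i(f_i(x))$ for each $i$, hence $y\preceq_f x \iff y\preceq_{\tilde f} x$;
\item $f_j(y)<f_j(x) \iff \varphi_j(f_j(y))<\varphi_j(f_j(x))$ for each $j$.
\end{itemize}
Together these give $y\prec_f x \iff y\prec_{\tilde f} x$.

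\emph{Step 3 (conclusion).} A point $x$ is in $\PS^{(f,C)}$ iff $x\in C$ and no $y\in C$ satisfies $y\prec_f x$. By Steps 1 and 2 this holds iff $x\in C'$ and no $y\in C'$ satisfies $y\prec_{\tilde f}x$, i.e.\ iff $x$ is Pareto-optimal for \eqref{eq:transformed}.

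There is no real obstacle here. The one point needing care is that the domain of each $\varphi_i$ is $\Image(f_i)$, so every value $\varphi_i(f_i(x))$ is defined, and strictness of monotonicity is what makes both equivalences in \Cref{lem:monotone} hold.
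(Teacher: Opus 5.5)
Your proof is correct and follows the paper's route. Both arguments get $C=C'$ from \Cref{lem:preserve_feasible} and then use \Cref{lem:monotone} to transfer dominance; the only difference is that you state $y\prec_f x \iff y\prec_{\tilde f} x$ once, while the paper proves the two inclusions separately by contradiction (and reading $\Image(g_i')$ as $\Image(h_i)$ is indeed the intended meaning).
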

\begin{proof}
The feasible set of the problem $(f,g,h)$  equals $C= \left(\bigcap_i \{ x | g_i(x) \leq 0 \}\right) \cap \left(\bigcap_i \{ x | h_i(x) = 0\}\right)$. By \cref{lem:preserve_feasible}, $\{ x | g_i(x) \leq 0 \} = \{ x | \tau_i(g_i(x)) \leq 0 \} $ and $\{ x | h_i(x) = 0 \} = \{ x | \tau_i'(h_i(x)) = 0 \} $ such that the feasible set of \eqref{eq:transformed}, $C'$, equals $C$. Let $x$ be a Pareto-optimal solution of the problem $(f,g,h)$, then $x \in C$ and there is no $y \in C$ such that $y$ is better than $x$ with respect to $\prec_f$.
Since $C=C'$, $x$ is feasible for the problem \eqref{eq:transformed}. Assume that $x$ is not Pareto-optimal for \eqref{eq:transformed}, this means that there exists $y \in C'$ such that $y$ is better than $x$ with respect to $\prec_{\varphi \circ f}$, i.e., $\varphi_i (f_i(y) ) \leq \varphi_i ( f_i(x))$ for all $i=1,\ldots, m$ and $\varphi_j (f_j(y)) < \varphi_j ( f_j(x))$ for some $j$. Yet, since all $\varphi_i$ are strictly increasing, it implies by Lemma~\ref{lem:monotone} that $f_i(y) \leq f_i(x)$ for all $i$ and $f_j(y) < f_j(x)$ for some $j$, which contradicts that $x$ is Pareto-optimal for $(f,g,h)$.
Conversely, let $x \in \PS^{(\varphi\circ f, \tau\circ g, \tau'\circ h)}$.  
Then $x \in C$ and there is no $y \in C$ such that $y \prec_{\varphi \circ f} x$.  
Assume that $x \notin \PS^{(f,g,h)}$.  
Then there exists $y \in C$ such that $y \prec_f x$, i.e.,
$f_i(y) \le f_i(x)$ for all $i$, and $f_j(y) < f_j(x)$ for some $j$.
Since each $\varphi_i$ is strictly increasing, this implies
$\varphi_i(f_i(y)) \le \varphi_i(f_i(x))$ for all $i$ and
$\varphi_j(f_j(y)) < \varphi_j(f_j(x))$ for some $j$
so $y \prec_{\varphi \circ f} x$, a contradiction. Thus,
$
\PS^{(f,g,h)} = \PS^{(\varphi\circ f, \tau\circ g, \tau'\circ h)}.
$
\end{proof}

\noindent This proposition implies that all Pareto set derivations carried out in the sequel for a constrained multiobjective problem $(f,g,h)$ remain valid for the class of transformed problems \eqref{eq:transformed} 
where $\varphi_i$ are strictly increasing, $\tau_i$ are sign-preserving and $\tau'_i$ are zero-only-at-zero. This latter class of problems is considerably more general. Although the Pareto set remains unchanged within the class, problems belonging to it can vary greatly in difficulty for algorithms that are not invariant under such transformations. For example, applying a strictly increasing transformation such as 
$x \mapsto x^\alpha$ for $\alpha \in (0, \frac{1}{2})$, we can transform the convex-quadratic function $f(x) = x^2$ into a non-convex one.
Some illustrations of this effect are discussed later in \Cref{sec:diff-shape}.

\subsection{Convex-quadratic objectives with convex constraints}\label{sec:sub-31}
We consider $m$ strictly convex-quadratic functions $f_i(x) = \frac12 (x - c_i)^\top H_i (x - c_i) + v_i$ where $c_i \neq c_j$ for $i \neq j$, the $m$ symmetric matrices $H_1, \ldots, H_m$ satisfy $H_i \succ 0 $ and $v_1, \ldots, v_m$ belong to $\R$.
We additionally assume $p$ inequality constraints $g_i: \R^n \to \R$ that are convex and lower semicontinuous and $q$ linear equality constraints $h_j: \R^n \to \R$. 
 We include in the above  the possibility to set $p=0$ or $q=0$ (or both) which correspond to the cases of no inequality constraints or no equality constraints (or no constraints at all).
As a particular case, the inequality constraints $g_i$ can be linear. In the sequel, we use $f^\text{c-q}$ to denote the multiobjective problem with the above convex-quadratic objective functions $f = (f_1,\ldots,f_m)$.

\subsubsection{Unconstrained Pareto set geometry}

We first characterize the geometry of the unconstrained Pareto set of the multiobjective problem with convex-quadratic objectives and prove more precisely its mathematical expression. Previous works analyze the case where $m=2$~\cite{toure2019bi,glasmachers2019challenges}.
The proof exploits the property that the Pareto-optimal solutions coincide with the optima of the scalarized function $x \to \sum \theta_i f_i(x)$ for $\theta \in \Sm$ in the unit simplex (see the left plot in Figure~\ref{fig:landscapes-and-weights} for an illustration). Although the result is simple, to the best of our knowledge it has not been stated in this form in the literature; we therefore provide a proof.

\begin{figure}[t]
    \begin{center}
        \includegraphics[height=0.313\linewidth, trim={15pt 15pt 10pt 10pt}, clip]{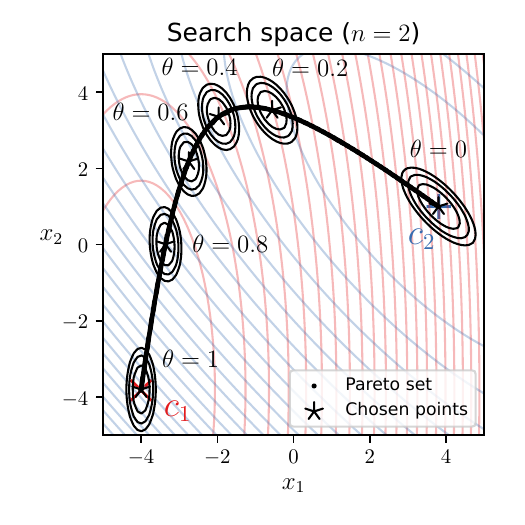}%
        \includegraphics[width=0.687\linewidth, trim={40pt 5pt 40pt 15pt}, clip]{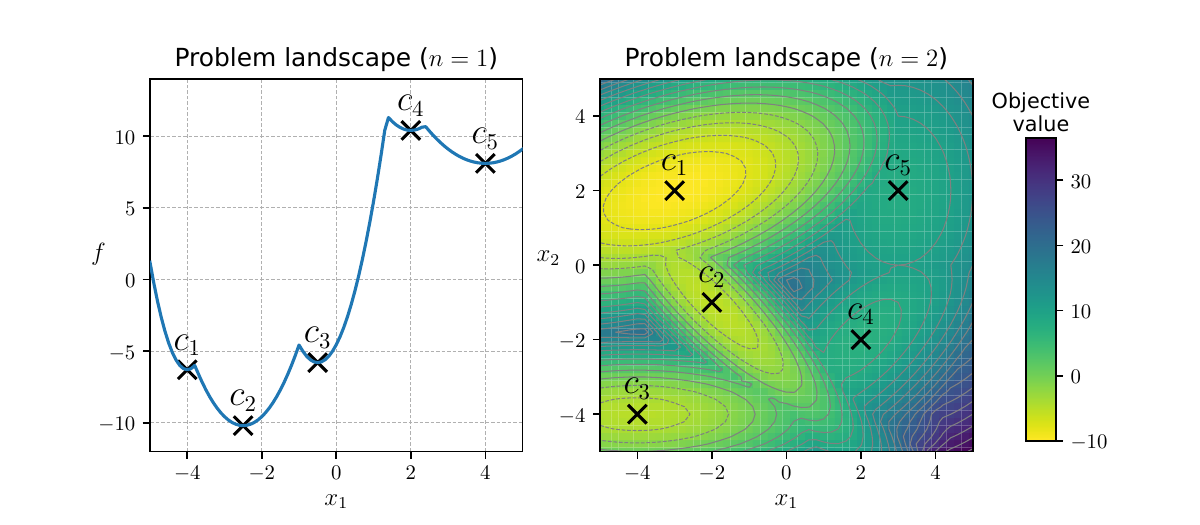}%
    \end{center}
    \caption{
    Left: The level sets of scalarized functions $\ftheta(x) = \theta f_1(x) + (1-\theta) f_2(x)$ for varying $\theta \in [0, 1]$ are shown in black, while the level sets of $f_1$ and $f_2$ are given in red and blue, respectively. The Pareto set (the thick black curve between the objectives' optima) corresponds to the optima of the scalarized functions.
    Uniform sampling of $\theta$ in weight space leads to a non-uniform distribution of Pareto-optimal solutions due to the differing, non-isotropic Hessians of the two objectives.
    Center and right: Multipeak problems with five local minima labeled as $c_i$ in search space with dimension $n=1$ (center) and $n=2$ (right).} 
    \label{fig:landscapes-and-weights}
\end{figure}

\begin{proposition}\label{prop:PS_CQ}
\!Consider an $m$-objective problem with strictly convex-quadratic objectives
$
(f_1(x) = \frac12 (x- c_1)^\top H_1 (x-c_1) + v_1, \ldots, f_m(x) = \frac12 (x - c_m)^\top H_m (x- c_m) + v_m)
$
where the centers $c_i$ are in $\R^n$, $v_i \in \R$ and $H_i \succ 0$. Denote $f^\text{c-q}=(f_1,\ldots,f_m)$. The Pareto set of this unconstrained problem is composed of the optima of the scalarized single-objective functions $\ftheta(x) = \sum \theta_i f_i(x) $ for $\theta \in \Sm$. For each $\theta$, the function $x \mapsto \ftheta(x) $ is also strictly convex-quadratic with positive definite Hessian matrix $H_\theta = \theta_1 H_1 + \ldots + \theta_m H_m$ and unique optimum $c_{\theta} = H_\theta^{-1}( \sum_i \theta_i H_i c_i )$. Moreover,
$
\ftheta(x) = \sum_{i=1}^m \theta_i f_i(x) = \frac12 (x - c_\theta)^\top H_\theta (x - c_\theta) + \sum_{i=1}^m \theta_i v_i \enspace.
$ 
Formally, the Pareto set equals
$
\PS^{(f^\text{c-q},\R^n)} = \left\{ H_\theta^{-1}(\theta_1 H_1 c_1 + \ldots + \theta_m H_m c_m) , \theta \in \Sm \right\}.
$
\end{proposition}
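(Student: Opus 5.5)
The plan is to combine \Cref{lem:stric-convex-WPOisPO} with \Cref{Jahn:scalarizing}, taking $C=\R^n$, and then to compute the minimizer of each scalarization explicitly.

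First, each $f_i$ is continuous and strictly convex, since $H_i \succ 0$. By \Cref{lem:stric-convex-WPOisPO}, the Pareto set therefore coincides with the set of weakly Pareto-optimal solutions. Since $\R^n$ is convex and each $f_i$ is convex, \Cref{lem:fconvex} shows that $f(\R^n)+\R^m_+$ is convex. \Cref{Jahn:scalarizing} then says that $x^\star$ is weakly Pareto-optimal if and only if $x^\star \in \argmin_x \sum_i t_i f_i(x)$ for some $t \in \R^m_+\setminus\{0\}$. Dividing by $\sum_i t_i>0$ leaves the argmin unchanged, so we may normalize $t$ to some $\theta \in \Sm$. Hence $\PS^{(f^\text{c-q},\R^n)} = \bigcup_{\theta\in\Sm} \argmin_x \ftheta(x)$.

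Second, fix $\theta \in \Sm$ and set $H_\theta = \sum_i \theta_i H_i$. This matrix is symmetric. It is also positive definite: the $\theta_i$ are nonnegative and at least one of them, say $\theta_j$, is positive, so $x^\top H_\theta x \ge \theta_j x^\top H_j x > 0$ for every $x\neq 0$. In particular $H_\theta$ is invertible, and $c_\theta = H_\theta^{-1}\sum_i\theta_i H_i c_i$ is well defined.

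Expanding, we get
\[
\ftheta(x)=\tfrac12 x^\top H_\theta x - x^\top \textstyle\sum_i \theta_i H_i c_i + \tfrac12\sum_i\theta_i c_i^\top H_i c_i + \sum_i\theta_i v_i .
\]
The linear term equals $-x^\top H_\theta c_\theta$. Completing the square gives
\[
\ftheta(x) = \tfrac12 (x-c_\theta)^\top H_\theta (x-c_\theta) + \kappa_\theta + \sum_i \theta_i v_i,
\]
with constant
\[
\kappa_\theta = \tfrac12\sum_i\theta_i c_i^\top H_i c_i - \tfrac12 c_\theta^\top H_\theta c_\theta .
\]
Because $H_\theta\succ 0$, the unique minimizer of $\ftheta$ is $c_\theta$. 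Substituting into the union above yields the stated formula for the Pareto set.

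The main obstacle is the displayed identity for $\ftheta$, which omits $\kappa_\theta$. The constant $\kappa_\theta$ does not vanish in general: for $m=2$, $n=1$, $H_i=1$, $c_1=0$, $c_2=1$, $\theta=(\tfrac12,\tfrac12)$ one gets $\kappa_\theta = \tfrac14-\tfrac18=\tfrac18$. I would therefore state the identity as holding up to an additive constant depending only on $\theta$, namely $\kappa_\theta$, and carry $\kappa_\theta$ explicitly. This does not affect the argmin or any later conclusion about the Pareto set. Everything else is a direct invocation of the earlier results.
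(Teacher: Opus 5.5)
Your argument is correct and follows the paper's proof essentially step for step. Like the paper, you invoke \Cref{lem:stric-convex-WPOisPO}, \Cref{lem:fconvex} and \Cref{Jahn:scalarizing}, normalize the weights to $\Sm$, and then complete the square. The paper delegates that last step to a cited lemma, and it leaves implicit the positive-definiteness of $H_\theta$, which you prove. Your remark about the constant is also right: in fact $\ftheta(x) = \frac12 (x-c_\theta)^\top H_\theta (x-c_\theta) + \kappa_\theta + \sum_i \theta_i v_i$ with $\kappa_\theta = \frac12\sum_i \theta_i \|c_\theta - c_i\|_{H_i}^2 \geq 0$, which is strictly positive whenever two distinct centers receive positive weight. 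So the identity displayed in the statement, and repeated in the paper's proof, is missing this term. As you say, this changes neither the unique minimizer $c_\theta$ nor the Pareto set characterization, nor the later use in \Cref{theo:mainCQ}, which only needs the argmin.
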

\begin{proof}
 According to \Cref{lem:stric-convex-WPOisPO}, since all objectives are continuous and strictly convex, Pareto optima and weak Pareto optima coincide. In addition, since all objectives $f_i$ are convex, the set $f(\R^n)+\R_+^m$ is convex, by \Cref{lem:fconvex}.  Thus, the assumptions of \Cref{Jahn:scalarizing} are verified. Hence, $x^\star$ is a Pareto-optimal solution if and only if there exist $t_i \geq 0$, $i = 1, \dots, m$, with at least one $t_i > 0$ such that $x^\star \in \argmin_{x\in \R^n} \sum_{i=1}^m t_i f_i(x)$. Since at least one $t_i > 0$, defining $\theta_i = t_i / \sum_j t_j$ yields $\theta \in \Sm$. Since $\argmin$ is unchanged if we minimize $\sum_{i=1}^m t_i f_i(x)$ or $\sum_{i=1}^m t_i/(\sum_j t_j) f_i(x)$, $x^\star$ is a Pareto-optimal solution if and only if $x^\star \in \argmin_{x\in \R^n} \sum_{i=1}^m \theta_i f_i(x)$ for some $\theta \in \Sm$. Let $\theta \in \Sm$ and denote $\ftheta (x) = \sum_{i=1}^m \theta_i f_i(x)$. By \cite[Lemma~3.6]{GECCO2025}, the function $\ftheta(x)$ can be written as $\ftheta(x) = \frac12 (x - c_\theta)^\top H_\theta (x - c_\theta) + \sum_{i=1}^m \theta_i v_i $ with $H_\theta = \theta_1 H_1 + \ldots + \theta_m H_m$ and the unique optimum $c_{\theta} = H_\theta^{-1}( \sum_i \theta_i H_i c_i )$. Overall, we obtain that $x^\star$ is a Pareto-optimal solution if and only if there exists $\theta \in \Sm$ such that $x^\star = H_\theta^{-1}( \sum_i \theta_i H_i c_i )$.
\end{proof}

\noindent When the Hessian matrices $H_i$ are proportional to each other, say, each $H_i = \lambda_i H_0$ with $H_0$ symmetric positive definite and $\lambda_i > 0$, then  $H_\theta = \theta_1 H_1 + \ldots + \theta_m H_m = (\sum_{i=1}^m \theta_i \lambda_i) H_0$ and thus we recover that the Pareto set is the convex hull of the optima $c_i$ of the single-objective functions:
\begin{align}
\PS^{(f^\text{c-q},\R^n)} & = \left\{ \left(\sum_{i=1}^m \theta_i \lambda_i \right)^{-1}  \sum_{i=1}^m \theta_i \lambda_i c_i  , \theta \in \Sm \right\} = \left\{ \sum_{i=1}^m \theta_i' c_i, \theta' \in \Sm \right\}.
\end{align}
In the case of two objectives, the convex hull of the optima corresponds to the segment between the optima.
Figure~\ref{fig:single-2-and-3} illustrates this case (leftmost plot) as well as two additional cases of Pareto sets for arbitrary Hessian matrices with low and slightly increased conditioning (second and third plot from the left) showcasing different Pareto-set geometries in the case of two objectives. The rightmost two plots of Figure~\ref{fig:single-2-and-3} show an example with three objectives where each is a different convex-quadratic function with optima in $c_1$, $c_2$ and $c_3$, respectively.

\begin{figure}[t]
    \centering
    \begin{tabular}{@{}c@{}c@{}c@{}c@{}}
    \includegraphics[width=0.203\textwidth, trim={15pt 15pt 10pt 5pt}, clip]{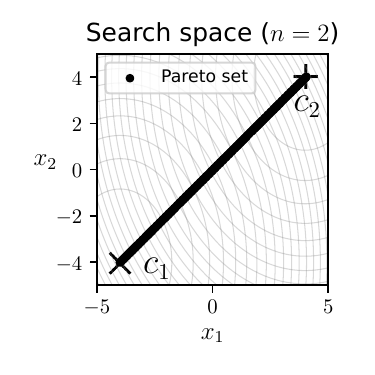}&
    \includegraphics[width=0.203\textwidth, trim={15pt 15pt 10pt 5pt}, clip]{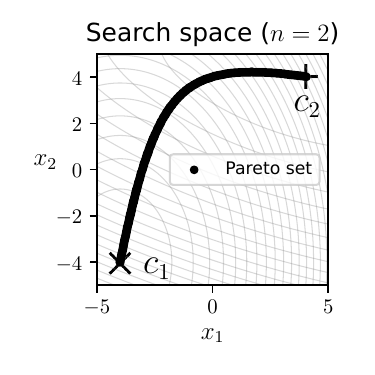}&
    \includegraphics[width=0.203\textwidth, trim={15pt 15pt 10pt 5pt}, clip]{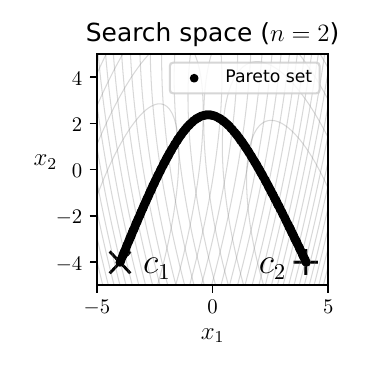}&
    \vspace{5pt}\includegraphics[width=0.391\textwidth, trim={20pt 5pt 10pt 0pt}, clip]{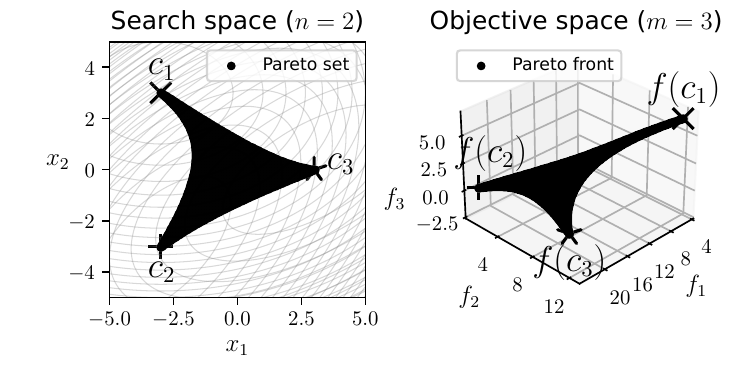}\vspace{-10pt}
    \end{tabular}
    \caption{The leftmost three plots show the Pareto set in the search space for three problems with two convex-quadratic objective functions of two variables and no constraints. The thin gray lines represent the level sets of each objective. The shape and location of the Pareto set depend on the relative positions of the function centers and on the Hessian matrices of the objectives, including both eigenvalues and eigenvector orientations.
    The rightmost two plots show, respectively, the Pareto set and front for a problem with two variables, three convex-quadratic objectives and no constraints.}
    \label{fig:single-2-and-3}
\end{figure}

\subsubsection{Pareto set as the projection of the unconstrained Pareto set on the feasible set}\label{sec:sub-32}

When the objectives $f^\text{c-q}$ of the problem from \Cref{sec:sub-31} are spherical, i.e., each Hessian matrix equals the identity, the Pareto set corresponds to the projection with respect to the Euclidean norm of the unconstrained Pareto set on the non-empty convex feasible set $C$ \cite{GECCO2025}. 
We generalize this result to the case of $m$ strictly convex-quadratic functions. Given $A \succ 0$, and its unique symmetric square root $A^{1/2}$,  we define the inner product $\langle x,y \rangle_A = x^\top A y = (A^{1/2}x)^\top A^{1/2} y$ and its associated norm $\|.\|_{A}$ as
\begin{equation}\label{eq:normA}
\|x\|_{A} = \sqrt{x^\top A x} = \|A^{1/2} x \|_2 .
\end{equation}
We prove that the Pareto set is the projection of the unconstrained Pareto set onto the feasible set. The projection is with respect to a norm that depends on the projected solution and that corresponds to the norm \eqref{eq:normA} with respect to a matrix defined by a convex combination of the Hessian matrices of the convex-quadratic problems. More precisely, the following result, illustrated in Figure~\ref{fig:projection}, holds.
\begin{theorem}\label{theo:mainCQ}
\!Consider an $m$-objective strictly convex-quadratic problem $f^\text{c-q}$ with convex constraints ($g_i: D_i 
\to \R$ are convex and lower-semicontinuous and $h_j$ are linear). We assume that $H_i \succ 0$. Let $C$ be its associated feasible set. Assume that $C$ is non-empty. A solution $x^\star$ is Pareto-optimal if and only if it is the projection of a solution of the unconstrained Pareto set $c_\theta \in \PS^{(f^\text{c-q},\R^n)}$ with respect to the norm $\| . \|_{H_\theta}$ onto the feasible set $C$ where $H_\theta=\theta_1 H_1 + \ldots + \theta_m H_m $ is such that $c_\theta = H_\theta^{-1}(\theta_1 H_1 c_1 + \ldots \theta_m H_m c_m)$ for $\theta \in \Sm$ (see Proposition~\ref{prop:PS_CQ}). In other words,
\maybemath{\PS^{(f^\text{c-q},C)} = \left\{ \Proj_{C,\|.\|_{H_\theta}} (c_\theta), c_\theta =  \left( \sum_{i=1}^m \theta_i H_i \right)^{-1}\sum_{i=1}^m \theta_i H_i c_i , \theta \in \Sm \right\}.}
\end{theorem}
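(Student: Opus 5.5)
The plan is to reduce the constrained problem to weighted-sum scalarizations over $C$, and then to recognize each scalarized minimization as a projection in the norm $\|.\|_{H_\theta}$.

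\emph{Step 1: reduction to scalarizations.} By \Cref{lem:convex-feasible}, the feasible set $C$ is closed and convex, and it is non-empty by assumption. The objectives $f_i$ are continuous and strictly convex. Hence \Cref{lem:stric-convex-WPOisPO} shows that Pareto optimality and weak Pareto optimality coincide. \Cref{lem:fconvex} shows that $f(C)+\R^m_+$ is convex. Therefore \Cref{Jahn:scalarizing} applies. After normalizing the weights $t_i$ to $\theta_i = t_i/\sum_j t_j$, exactly as in the proof of Proposition~\ref{prop:PS_CQ}, we obtain: $x^\star \in \PS^{(f^\text{c-q},C)}$ if and only if $x^\star \in \argmin_{x\in C} \ftheta(x)$ for some $\theta\in\Sm$.

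\emph{Step 2: scalarization as projection.} Fix $\theta \in \Sm$. Proposition~\ref{prop:PS_CQ} gives
\[
\ftheta(x) = \tfrac12 (x-c_\theta)^\top H_\theta (x-c_\theta) + \sum_i \theta_i v_i = \tfrac12 \|x-c_\theta\|_{H_\theta}^2 + \text{const}.
\]
Here $H_\theta \succ 0$, because it is a convex combination of positive definite matrices with at least one $\theta_i>0$. So $\langle .,.\rangle_{H_\theta}$ is an inner product and $\|.\|_{H_\theta}$ is its associated norm. Minimizing $\ftheta$ over $C$ is therefore equivalent to minimizing $\|x - c_\theta\|_{H_\theta}$ over $C$, since $t\mapsto t^2/2$ is strictly increasing on $[0,\infty)$. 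Theorem~\ref{theo:proj}, applied with this inner product and the non-empty closed convex set $C$, then gives
\[
\argmin_{x\in C}\ftheta(x) = \{\Proj_{C,\|.\|_{H_\theta}}(c_\theta)\}.
\]
In particular, the minimizer exists and is unique.

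\emph{Step 3: combining.} Steps 1 and 2 together give the claimed set equality. For each $\theta$, the point $c_\theta$ is the corresponding element of $\PS^{(f^\text{c-q},\R^n)}$ described in Proposition~\ref{prop:PS_CQ}. The norm used for the projection is $\|.\|_{H_\theta}$ with the same $\theta$; this dependence of the norm on the projected point is what the statement encodes. For the direction ``projection implies Pareto-optimal'', the argument goes through the weak-optimality characterization and does not need the stricter footnote version of \Cref{Jahn:scalarizing}. This is the reason for using \Cref{lem:stric-convex-WPOisPO}, which also covers weights $\theta$ with zero entries.

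\emph{Expected obstacle.} The step needing the most care is checking the hypotheses in Step 1. \Cref{Jahn:scalarizing} needs $C$ non-empty and $f(C)+\R^m_+$ convex. The convexity holds because $C$ is convex and each $f_i$ is convex, and $C$ is convex because the $g_i$ are convex and the $h_j$ are linear. One must also handle the domains $D_i$ of the $g_i$, for instance by viewing the $g_i$ as extended-valued functions on $\R^n$, so that \Cref{lem:convex-feasible} still yields a closed convex $C$. The remaining steps are direct substitutions.
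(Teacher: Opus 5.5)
Your proposal is correct and follows the paper's proof essentially step for step. You obtain closedness and convexity of $C$ from \Cref{lem:convex-feasible}, and convexity of $f(C)+\R^m_+$ from \Cref{lem:fconvex}, so that \Cref{Jahn:scalarizing} turns weak Pareto optimality into minimizing a normalized weighted sum over $C$. You then rewrite $\ftheta$ as $\tfrac12\|x-c_\theta\|_{H_\theta}^2$ plus a constant, so its minimizer over $C$ is the unique $\|.\|_{H_\theta}$-projection of $c_\theta$, and finish with \Cref{lem:stric-convex-WPOisPO} to identify weak and full Pareto optimality. Your explicit check that $H_\theta \succ 0$ and your caveat about the domains $D_i$ are minor points the paper leaves implicit. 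On the latter, note that the extended-valued constraints must be lower semicontinuous on all of $\R^n$ for $C$ to be closed.
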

\begin{proof}
From \Cref{lem:convex-feasible}, the feasible set $C$ is closed and convex. In addition, we assume it is non-empty and thus satisfies the assumption needed for the projection \Cref{theo:proj}.
Each objective function $f_i$ is convex (even strictly convex) and thus, according to \Cref{lem:fconvex}, since $C$ is convex, $f(C)+\R^m_+$ is convex. Thus, from \Cref{Jahn:scalarizing}, $x^\star$ is weakly Pareto-optimal if and only if there exist $t_1' \geq 0, \ldots, t_m' \geq 0$ and at least one non-zero $t_i'$ such that $x^\star \in {\rm argmin}_{x \in C} t_1' f_1(x) +  \ldots + t_m' f_m(x)$. Dividing by $\sum_i t_i'$ and calling $t_i = t_i'/\sum_i t_i'$, we have that $t_i \geq 0$ and $\sum_i t_i = 1$ as well as 
$x^\star \in {\rm argmin}_{x \in C} \sum_i t_i f_i(x) = {\rm argmin}_{x \in C} \frac 12 (x - c_t)^\top H_t (x-c_t)   
$
with $c_t = H_t^{-1}( \sum_i t_i H_i c_i ) $ and $H_t = t_1 H_1 + \ldots + t_m H_m$ (see \cite[Lemma~3.6]{GECCO2025}). Using the norm definition \eqref{eq:normA}, we find that 
$
x^\star \in {\rm argmin}_{x \in C} \|x-c_t\|^2_{H_t} = {\rm argmin}_{x \in C} \|c_t-x \|_{H_t}.
$ 
We see that $x^\star$ corresponds to the unique projection of $c_t$ with respect to the norm $\|.\|_{H_t}$ onto the closed non-empty convex set $C$. In addition, from \Cref{prop:PS_CQ} it follows that $c_t$ is Pareto-optimal for the unconstrained problem. Overall, we have shown that $x^\star$ is weakly Pareto-optimal, if and only if it is the unique projection with respect to the norm $\|.\|_{H_t}$ of a solution $c_t = H_t^{-1}( \sum_i t_i H_i c_i ) $ with $t_i \geq 0 $  and $\sum t_i =1$ from the unconstrained Pareto set. Hence, since $f_i$ are strictly convex and the sets of all weakly Pareto-optimal and all Pareto-optimal solutions coincide, $x^\star$ is Pareto-optimal if it is the unique projection of a solution $c_t = H_t^{-1}( \sum_i t_i H_i c_i ) $ with $t_i \geq 0 $  and $\sum t_i =1$ that belongs to the unconstrained Pareto set.
\end{proof}

\begin{figure}[t]
    \centering
    \includegraphics[width=0.6\linewidth,trim={15pt 10pt 5pt 5pt}, clip]{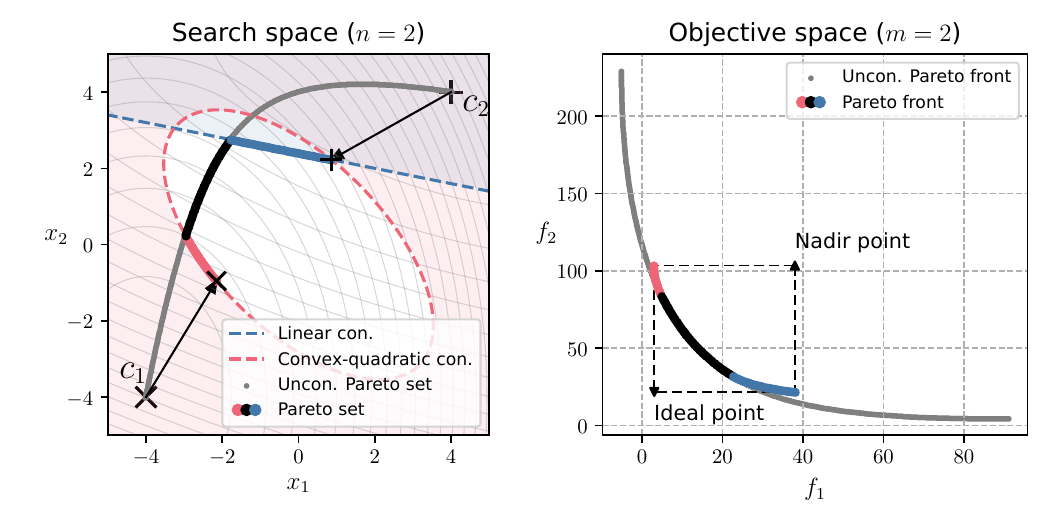}\\[-1em]
    \caption{Illustration of Theorem~\ref{theo:mainCQ}. Left: Search space with unconstrained (gray) and constrained (red, black and blue) Pareto set, together with the objectives' level sets in light gray, one linear constraint (dashed blue) and one ellipsoidal constraint (dashed red). The projections of the objectives' optima, $c_1$ and $c_2$ onto the feasible set are detailed by the two arrows. Right: The corresponding unconstrained (in gray) and constrained (red, black and blue) Pareto front. The colors indicate whether the corresponding Pareto-optimal solutions are projected onto the linear constraint (blue), ellipsoidal constraint (red), or not projected at all (black).
    }
    \label{fig:projection}
\end{figure}

\subsection{Multipeak objectives and constraints}
\label{subsec:extension_multipeak}
In the previous section, we have considered objective functions that are strictly convex and that can model ill-conditioned problems. Another source of difficulty in optimization relates to nonconvexity of the objectives and of the feasible set. We consider here a class of problems from which we can design multimodal problems (i.e., problems that admit more than one local optimum) and non-convex disconnected feasible sets. They are built as the minimum of several functions. More precisely, consider WLG two finite families of functions $\{f_{a_i}: \R^n \to \R , i=1,\ldots,k_a  \} $ and $\{f_{b_j}: \R^n \to \R , j=1,\ldots,k_b  \} $
for $k_a, k_b \in \mathbb{N}$
and the associated multipeak functions
$ f_a(x) = \min_i f_{a_i}(x)$ and $f_b(x) = \min_j f_{b_j}(x)$~\cite{schaepermeier2023multipeaks}.
With a slight abuse of terminology, we refer to this problem as \emph{multipeak}, although it also encompasses unimodal functions (i.e., with a single peak), for instance when $k_a=k_b=1$ and/or  $f_{a_1}$ and $f_{b_1}$ are unimodal. Note also that the term \emph{peak} is more appropriate when maximizing and considering the maximum of several unimodal functions rather than the minimum. We nevertheless retain the term multipeak (instead of multivalley). The center and right plots 
in Figure~\ref{fig:landscapes-and-weights} illustrate multipeak problem instances with five local peaks in dimensions 1 and 2.

\subsubsection{Preliminary results}
To compute the Pareto sets of multiobjective problems with multipeak objectives or constraints, we exploit generic theoretical properties that we first derive in a general setting and later apply to the multipeak case.

Consider a constrained multiobjective problem $(f,C)$ and an arbitrary set $S \subset \R^n$, which may be infinite. We define the operator $S \mapsto \ND^{(f,C)}(S)$, which extracts the set of non-dominated feasible solutions in $S$ for the constrained multiobjective problem $(f,C)$.
Formally, $y \in \ND^{(f,C)}(S)$ if and only if $y \in S \cap C$  and there is no solution in $S \cap C$ that dominates $y$. This set may be empty in the continuous case. Hence, by definition, the set of Pareto-optimal solutions of problem~\eqref{eq:MO-constrained} corresponds to the extraction of non-dominated solutions from the feasible set $C$, i.e.
$
\PS^{(f,C)}=\ND^{(f,C)}(C)
$.
This equation is, however, of limited practical interest as it does not, in general, help to approximate the Pareto set, since $C$ is typically too large to be approximated. Nevertheless, we will explain why it is key to our construction of the Pareto set for multipeak problems.
For a finite set $S$, efficient implementations of the $S \to \ND^{(f,\R^n)}(S)$ operator are available, for example in \href{https://github.com/multi-objective/moocore}{{\ttfamily MOOCore}} or the \href{https://pypi.org/project/moarchiving/}{{\ttfamily moarchiving} module}\footnote{The calculation of all non-dominated solutions within a set of $N$ solutions is possible in time $\mathcal{O}(N\log^{m-1} N)$ for $m\geq 4$ objective functions and in time $\mathcal{O}(N\log N)$ when $m$ is 2 or 3 \cite{kung1975finding}. Computing the non-dominated solutions from the union of two non-dominated sets $A$ and $B$ can itself be done in $\mathcal{O}(|A|\cdot|B|)$ time \cite{karathanasis2025improved}.}. \Cref{prop:ND-incl} presents a key result for computing the Pareto set of the test problems we will consider. It holds under the assumption of the so-called domination property, defined as follows.
\begin{definition}\cite[Definition 2.1]{soriano2026decision}, \cite[Definition~3.2.6]{sawaragi1985theory}
    A multiobjective problem $(f,C)$ is said to satisfy the domination property if for all $x \in C \backslash \PS^{(f,C)}$ there exists $z \in \PS^{(f,C)}$ such that $z \prec x$.
\end{definition}

\noindent The domination property is satisfied  if $f(C)$ is closed and convex and the Pareto set is non-empty (see \cite[Theorem~3.2.12]{sawaragi1985theory} where equivalent conditions to satisfy the domination property are also formulated)
or if $f$ is continuous and $C$ is compact (see \cite[Theorem~2.2]{soriano2026decision}).

\begin{proposition}\label{prop:ND-incl} Consider a (constrained) multiobjective problem $(f,C)$ with its non-empty Pareto set $\PS^{(f,C)}$.
Assume that $(f,C)$ satisfies the domination property.
Let $K \subset C$ be a set such that $\PS^{(f,C)} \subset K$, then
$
\PS^{(f,C)} = \ND^{(f,C)}(K).
$
\end{proposition}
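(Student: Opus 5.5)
We establish the set equality $\PS^{(f,C)} = \ND^{(f,C)}(K)$ by proving the two inclusions separately. The definitions of $\ND$ and of the domination property do all the work. Throughout we use that $K \subset C$, so $K \cap C = K$, and hence $y \in \ND^{(f,C)}(K)$ means exactly that $y \in K$ and no element of $K$ dominates $y$.

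\emph{Inclusion $\PS^{(f,C)} \subset \ND^{(f,C)}(K)$.} Let $x \in \PS^{(f,C)}$. By hypothesis $\PS^{(f,C)} \subset K$, so $x \in K = K \cap C$. Suppose some $y \in K$ satisfied $y \prec x$. Since $K \subset C$, this $y$ would be feasible and would dominate $x$, contradicting the Pareto optimality of $x$. Hence $x \in \ND^{(f,C)}(K)$. This direction uses neither the domination property nor the non-emptiness assumption.

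\emph{Inclusion $\ND^{(f,C)}(K) \subset \PS^{(f,C)}$.} Let $y \in \ND^{(f,C)}(K)$; then $y \in K \subset C$, so $y$ is feasible. Argue by contradiction and assume $y \notin \PS^{(f,C)}$, i.e.\ $y \in C \setminus \PS^{(f,C)}$. The domination property gives some $z \in \PS^{(f,C)}$ with $z \prec y$. Because $\PS^{(f,C)} \subset K$, we have $z \in K$. So $y$ is dominated by an element of $K$, contradicting $y \in \ND^{(f,C)}(K)$. Therefore $y \in \PS^{(f,C)}$.

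This second inclusion is the only real step. Its key point is that the domination property provides a dominator lying in the Pareto set itself, rather than merely in $C$. Only then does the assumption $\PS^{(f,C)} \subset K$ guarantee that the dominator is a competitor inside $K$. Without this property, a point of $K$ could be dominated only by points of $C \setminus K$ and still survive the $\ND$ filter. The non-emptiness assumption on $\PS^{(f,C)}$ serves only to match the setting in which the domination property is meaningful; it is not otherwise needed in the argument.
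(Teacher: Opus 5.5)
Your proof is correct and follows the paper's argument. For the first inclusion, you note directly that no element of $K\subset C$ can dominate a point of $\PS^{(f,C)}$; the paper phrases the same step contrapositively, via the complement of $\ND^{(f,C)}(K)$ in $K$. For the second inclusion, both you and the paper use the domination property to produce a dominator $z\in\PS^{(f,C)}\subset K$.
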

\begin{proof}

We can decompose $K$ into the disjoint union of the set $\ND^{(f,C)}(K)$ and its complementary set (w.r.t.\ $K$) $\ND^{(f,C)}(K)^c$. Let $x \in \ND^{(f,C)}(K)^c$, then $ x$ does not belong to $\PS^{(f,C)}$ since we can find solutions better than $x$ inside $K$ and thus inside $C$. Hence, we have proven that $\PS^{(f,C)} \subset \ND^{(f,C)}(K)$.
Let us show the other inclusion and take $x \in \ND^{(f,C)}(K)$ and prove that it belongs to $\PS^{(f, C)}$. Assume that $x$ does not belong to $\PS^{(f,C)}$ to get a contradiction. 
By assumption, there exists $z \in \PS^{(f,C)}$ such that $z \prec x$. Since $\PS^{(f,C)}$ is a subset of $K$, $z$ belongs to $K$, but then we have found a solution $z$ in $K$ that dominates $x$. This contradicts that $x \in \ND^{(f,C)}(K)$.
\end{proof}
\noindent \Cref{prop:ND-incl} holds for unconstrained problems by taking $C=\R^n$. We infer from the proposition that, if we are able to prove that $\PS^{(f,C)}$ is included in a set $K$ that can be easily and accurately approximated, and such that non-dominated sorting can be performed on this approximation, then we obtain an efficient way to approximate the Pareto set, provided that the domination property is satisfied. This remark is the key observation for computing the Pareto set of problems whose objectives are multipeak combinations of convex quadratic functions as we will see in \Cref{subsec:multipeak}.
To be able to use \Cref{prop:ND-incl} and identify the Pareto set of problems with multipeak objectives based on the pairwise Pareto sets, we need to establish the domination property. While the feasible sets we consider are typically not compact (as they may be unbounded), we are nevertheless able to prove the domination property as if we were in a compact setting using the following proposition.

\begin{proposition}\label{prop:eq_ps}
Let $(f, C)$ be a constrained multiobjective problem with objective function $f$ and feasible set $C$.
Assume that there exists $y \in C$ and $r>0$, such that $y$ dominates all points of $C \setminus B(y,r)$ where $B(y,r)$ denotes the closed ball centered in $y$ with radius $r$.
Then 
$
\PS^{(f, C)} = \PS^{(f, C \cap B(y,r))}.
$
\end{proposition}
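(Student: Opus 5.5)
We prove the equality of the two Pareto sets by double inclusion, writing $C_r = C \cap B(y,r)$. The key preliminary observation is that $y \in C_r$: it is feasible and trivially lies in its own ball. Moreover, every point of $C \setminus C_r$ is dominated by $y$, so it is dominated by a point of $C_r$. Both inclusions will be obtained directly from the definition of Pareto optimality (no feasible point dominates), without invoking \Cref{prop:ND-incl}.

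\emph{Inclusion $\PS^{(f,C)} \subset \PS^{(f,C_r)}$.}
Let $x \in \PS^{(f,C)}$.
\begin{itemize}
\item First, $x$ must lie in $C_r$. If $x \in C \setminus B(y,r)$, then $y \prec x$ by hypothesis, contradicting the Pareto optimality of $x$ in $C$.
\item Second, no $z \in C_r$ dominates $x$, since $C_r \subset C$ and no point of $C$ dominates $x$.
\end{itemize}
Hence $x \in \PS^{(f,C_r)}$.

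\emph{Inclusion $\PS^{(f,C_r)} \subset \PS^{(f,C)}$.}
Let $x \in \PS^{(f,C_r)}$, so $x \in C_r \subset C$. Suppose some $z \in C$ satisfies $z \prec x$; we derive a contradiction in each case.
\begin{itemize}
\item If $z \in B(y,r)$, then $z \in C_r$, which contradicts the optimality of $x$ in $C_r$.
\item If $z \notin B(y,r)$, then $y \prec z$ by hypothesis. By transitivity of dominance, $y \prec x$. This transitivity follows from the weak inequalities $f_i(y) \le f_i(z) \le f_i(x)$ for all $i$, together with the strict inequality for the index $j$ where $f_j(y) < f_j(z)$. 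Since $y \in C_r$, this again contradicts the optimality of $x$ in $C_r$.
\end{itemize}
Hence $x \in \PS^{(f,C)}$.

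The only subtle points are checking that $y$ itself belongs to $C_r$ and verifying the transitivity of $\prec$. Both are routine, so no real obstacle is expected.
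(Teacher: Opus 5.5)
Your proof is correct and follows the paper's own argument: a double inclusion where, for the converse direction, $y \in C \cap B(y,r)$ cannot dominate $x$, and hence by transitivity no point of $C \setminus B(y,r)$ can either. You also spell out the transitivity of $\prec$ and the membership $y \in C \cap B(y,r)$, which the paper's proof uses without stating.
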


\begin{proof}
Let $x \in \PS^{(f, C)}$.  
Then $x$ is not dominated by any point in $C \cap B(y,r)$.  
Moreover, $x \notin C \setminus B(y,r)$, since in that case it would be dominated by $y$.  
Thus $x \in \PS^{(f, C \cap B(y,r))}$. Conversely, let $x \in \PS^{(f, C \cap B(y,r))}$.  
Then $x$ is not dominated by any point in $C \cap B(y,r)$.  
Thus $x$ is also not dominated by $y$ and therefore not by any point in $C \setminus B(y,r)$.  
Thus $x \in \PS^{(f, C)}$.
\end{proof}
\noindent Using \Cref{prop:eq_ps}, we establish the domination property for problems whose multipeak objectives are formed from strictly convex-quadratic functions, which allows us to apply \Cref{prop:ND-incl}.
\begin{proposition}\label{prop:multipeakCQ}
Let $(f = (f_1,\dots,f_m),C)$ be a constrained multiobjective problem with a non-empty closed feasible set $C$. Assume that each objective function is a multipeak function of the form
$
f_k(x) = \min_j \left( (x - c_{k,j})^{\top} H_{k,j} (x - c_{k,j}) + v_{k,j} \right),
$
where each matrix $H_{k,j}$ is symmetric positive definite. Then the problem satisfies the domination property.
\end{proposition}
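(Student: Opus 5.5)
The plan is to reduce to a compact setting through \Cref{prop:eq_ps}, and then invoke the compact-case criterion: $f$ continuous and $C$ compact imply the domination property (\cite[Theorem~2.2]{soriano2026decision}).

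\textbf{Step 1: coercivity.} Each $f_k$ is continuous, being a finite minimum of continuous quadratics. Let $\lambda_{\min}>0$ be the smallest eigenvalue among all $H_{k,j}$. Then every peak satisfies
\[
(x-c_{k,j})^\top H_{k,j}(x-c_{k,j})+v_{k,j} \ge \lambda_{\min}\|x-c_{k,j}\|^2+v_{k,j}.
\]
Taking the minimum over the finitely many $j$, we get $f_k(x)\to\infty$ as $\|x\|\to\infty$, uniformly in direction.

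\textbf{Step 2: localization.} Fix any $y\in C$ (possible since $C\neq\emptyset$). Set $M=\max_k f_k(y)+1$. By coercivity there is $r>0$ such that $\|x-y\|>r$ implies $f_k(x)>M>f_k(y)$ for every $k$. Hence $y$ strictly dominates every point of $C\setminus B(y,r)$. \Cref{prop:eq_ps} then gives $\PS^{(f,C)}=\PS^{(f,C\cap B(y,r))}$.

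\textbf{Step 3: compact problem.} The set $K_r:=C\cap B(y,r)$ is closed and bounded, hence compact, and it contains $y$, so it is non-empty. Since $f$ is continuous, the compact-case result applies: $(f,K_r)$ satisfies the domination property, and its Pareto set is non-empty. For example, a lexicographic minimizer exists on a compact set and is Pareto-optimal.

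\textbf{Step 4: transfer back to $C$.} Let $x\in C\setminus\PS^{(f,C)}$.
\begin{itemize}
\item If $x\in K_r$, then $x\notin\PS^{(f,K_r)}$, because the two Pareto sets are equal. The domination property on $K_r$ yields $z\in\PS^{(f,K_r)}=\PS^{(f,C)}$ with $z\prec x$.
\item If $x\notin K_r$, then $y\prec x$. Either $y$ is Pareto-optimal and we are done, or $y\in K_r\setminus\PS^{(f,K_r)}$, so some $z\in\PS^{(f,C)}$ satisfies $z\prec y$. By transitivity of dominance, $z\prec x$.
\end{itemize}

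The main obstacle is this last transfer step. One has to be careful that dominance is checked relative to $C$, and not only relative to $K_r$. That is exactly what the equality of Pareto sets from \Cref{prop:eq_ps}, together with transitivity of $\prec$, provides.
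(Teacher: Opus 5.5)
Your proof is correct and follows essentially the same route as the paper. Both use coercivity from the smallest-eigenvalue bound, localize to $C\cap B(y,r)$ via \Cref{prop:eq_ps}, apply the compact-case domination result, and transfer back to $C$ with the same two cases: points inside the ball, and points outside it, which are dominated by $y$ and hence, by transitivity, by some Pareto-optimal $z$.
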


\begin{proof}
Fix some $y \in C$. Since $H_{k,j}$ is symmetric positive definite, let $\lambda_{k,j} > 0$ denote its smallest eigenvalue. Then for all $x \in \mathbb{R}^n$,
$
(x - c_{k,j})^{\top} H_{k,j} (x - c_{k,j}) \ge \lambda_{k,j}\|x - c_{k,j}\|^2
$
and thus
$
q_{k,j}(x) = (x - c_{k,j})^{\top} H_{k,j} (x - c_{k,j}) + v_{k, j} \ge \lambda_{k,j}\|x - c_{k,j}\|^2 + v_{k,j}.
$

In particular, for each pair $(k,j)$, there exists $r_{k,j} > 0$ such that for all $x$ outside of the ball $B(y,r_{k,j})$, 
$q_{k,j}(x) > f_k(y)$.
Define $r = \max_{k, j} r_{k, j}$. Then, for every $x \in C$ satisfying $\|x - y\| > r$, we have
$
f_k(x) > f_k(y)
 \text{ for all } k=1,\dots,m.
$
Consequently, $y$ dominates all points in $C \setminus B(y,r)$, and therefore $\PS^{(f, C)} = \PS^{(f, C \cap B(y,r))}$ by \Cref{prop:eq_ps}. Since $C \cap B(y,r)$ is closed and bounded, it is compact. Thus, the restricted problem $(f, C \cap B(y,r))$ satisfies the domination property by \cite[Theorem~2.2]{soriano2026decision}. Consequently, for every $x \in C \cap B(y, r)$, either $x \in \PS^{(f, C \cap B(y, r))} = \PS^{(f, C)}$ or there exists $z \in \PS^{(f, C)}$ such that $z \prec x$. Now let $x \in C \setminus B(y,r)$. By construction, $y \prec x$. Since $y \in C \cap B(y,r)$, the domination property on $(f, C \cap B(y, r))$ implies that either $y \in \PS^{(f, C)}$ or there exists $z \in \PS^{(f, C)}$ such that $z \prec y$, and thus $z \prec x$.
\end{proof}

\noindent We now derive another generic result when the feasible set $C$ can be written as the union of sets $C_i$. The Pareto set of the multiobjective problem $(f,C)$ is then included in the union of the Pareto sets of the multiobjective problems $(f,C_i)$.
\begin{proposition}\label{prop:union-feasible}
Consider a constrained multiobjective problem $(f,C)$ where the feasible set $C$ can be written as the union of sets $C_i$, i.e., $C = \bigcup_i C_i$. Then
$
\PS^{(f, \bigcup_i C_i)} \subset \bigcup_i \PS^{(f,C_i)} .
$
\end{proposition}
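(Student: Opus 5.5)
The plan is a direct element-chasing argument using only the definitions of Pareto optimality and of the feasible set. We take an arbitrary $x \in \PS^{(f,\bigcup_i C_i)}$ and show that it belongs to $\PS^{(f,C_i)}$ for some index $i$.

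First, Pareto optimality requires feasibility, so $x \in C = \bigcup_i C_i$. Hence there is at least one index $i_0$ with $x \in C_{i_0}$; we fix such an $i_0$. Note that $x$ may lie in several $C_i$; any one of them will do.

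Second, we show $x \in \PS^{(f,C_{i_0})}$ by contradiction. Suppose $x$ is not Pareto-optimal for $(f,C_{i_0})$. Since $x \in C_{i_0}$ is feasible for this problem, there must exist $y \in C_{i_0}$ with $y \prec x$. But $C_{i_0} \subset \bigcup_i C_i = C$, so $y$ is feasible for $(f,C)$. Thus $y$ is a feasible point of $(f,C)$ dominating $x$, contradicting $x \in \PS^{(f,C)}$. Therefore $x \in \PS^{(f,C_{i_0})} \subset \bigcup_i \PS^{(f,C_i)}$, which gives the claimed inclusion.

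There is no genuine obstacle. The argument is a monotonicity fact: Pareto optimality is inherited by any feasible subset that contains the point. The only care needed is to note that no finiteness or closedness of the family $\{C_i\}$ is used. We should also remark, without proving anything further, that the reverse inclusion fails in general: a point Pareto-optimal in one $C_i$ can be dominated by a point of another $C_j$. This is exactly why the result is later combined with the non-dominated extraction of \Cref{prop:ND-incl}.
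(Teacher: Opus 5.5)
Your proof is correct and is essentially the paper's argument. You pick $i_0$ with $x \in C_{i_0}$ and note that any $y \in C_{i_0}$ dominating $x$ would also lie in $C$, contradicting $x \in \PS^{(f,C)}$; the paper states the same step directly rather than by contradiction.
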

\begin{proof}
Let $x \in \PS^{(f, \bigcup_i C_i)}$, then by definition $x \in C = \bigcup_i C_i$ and there is no $ y \in C$ such that $f(y)$ is better than $f(x)$. Since $x \in \bigcup_i C_i$, there exists $i_0$ such that $x \in C_{i_0}$. Since there is no $y \in C$ such that $f(y)$ is better than $f(x)$, there is no $y \in C_{i_0}$ such that $f(y)$ is better than $f(x)$. It means that $x \in \PS^{(f,C_{i_0})}$, i.e., $x \in \bigcup_i \PS^{(f,C_i)}$. 
\end{proof}
\noindent This proposition will be used when considering non-convex feasible sets based on the union of sublevel sets of convex-quadratic functions. In this case, we will be able to approximate $\PS^{(f,C_i)}$ for some well-chosen $f$. We will then apply Proposition~\ref{prop:ND-incl} and compute the Pareto set approximation by selecting the non-dominated solutions from the union of approximations of $\PS^{(f,C_i)}$.

\subsubsection{Pareto set of multipeak problems}
We prove in the following proposition that any Pareto-optimal solution of a (constrained) bi-objective multipeak problem $(f_a, f_b)$ belongs to the union of the pairwise Pareto sets $\PS^{((f_{a_i},f_{b_j}),C)}$. This result is already claimed, but not proven in~\cite{schaepermeier2023multipeaks} in the case of an unconstrained problem, i.e., when $C = \R^n$. We prove it in both cases: constrained and unconstrained.
\begin{proposition}\label{prop:PS_multipeaks_constrained}
Let $x^\star$ be a Pareto-optimal solution for the bi-objective multipeak  problem $((f_a, f_b),C)$ where $C$ is a non-empty set ($C=\R^n$ for the unconstrained case). Then
$
x^\star \in \bigcup_{i,j} \PS^{((f_{a_i},f_{b_j}),C)} 
$, that is, $\PS^{((f_a, f_b),C)} \subset \bigcup_{i,j} \PS^{((f_{a_i},f_{b_j}),C)}$.
\end{proposition}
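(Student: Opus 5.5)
The plan is to pick the active peaks at $x^\star$ and argue directly from the definitions. Since each multipeak objective is a minimum over finitely many peak functions, at $x^\star$ there are indices $i_0 \in \{1,\ldots,k_a\}$ and $j_0 \in \{1,\ldots,k_b\}$ with $f_a(x^\star) = f_{a_{i_0}}(x^\star)$ and $f_b(x^\star) = f_{b_{j_0}}(x^\star)$; the minimum is attained because the families are finite. The claim is then that $x^\star \in \PS^{((f_{a_{i_0}},f_{b_{j_0}}),C)}$, which gives the inclusion in the union over all pairs $(i,j)$.

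First, feasibility is immediate: $x^\star \in C$ because it is Pareto-optimal for $((f_a,f_b),C)$. For optimality, suppose to the contrary that some $y \in C$ dominates $x^\star$ for the pair problem $(f_{a_{i_0}},f_{b_{j_0}})$. Then $f_{a_{i_0}}(y) \leq f_{a_{i_0}}(x^\star)$ and $f_{b_{j_0}}(y) \leq f_{b_{j_0}}(x^\star)$, with at least one of the two inequalities strict.

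The key inequality chain uses that a minimum never exceeds any of its terms, together with the choice of active indices:
\[
f_a(y) \leq f_{a_{i_0}}(y) \leq f_{a_{i_0}}(x^\star) = f_a(x^\star),
\qquad
f_b(y) \leq f_{b_{j_0}}(y) \leq f_{b_{j_0}}(x^\star) = f_b(x^\star).
\]
Strictness carries over: if, say, $f_{a_{i_0}}(y) < f_{a_{i_0}}(x^\star)$, then $f_a(y) < f_a(x^\star)$, and the same holds for $b$. Hence $y \prec_{(f_a,f_b)} x^\star$ with $y \in C$, contradicting the Pareto optimality of $x^\star$.

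No domination property, convexity or closedness is needed, so the argument works for an arbitrary non-empty $C$, including $C=\R^n$. The only point requiring care, and the step I would regard as the main (mild) obstacle, is choosing the active indices at $x^\star$ itself rather than at $y$. The inequality $f_a(y)\le f_{a_{i_0}}(y)$ holds for any index, whereas equality is needed at $x^\star$, so the active index must be fixed there. The same argument extends verbatim to $m$ objectives.
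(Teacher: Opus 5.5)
Your proof is correct and is essentially the paper's argument: fix the active peaks $i_0,j_0$ at $x^\star$, then turn any domination of $x^\star$ in the pair problem $(f_{a_{i_0}},f_{b_{j_0}})$ into domination in $(f_a,f_b)$ using $f_a(y)\le f_{a_{i_0}}(y)$ and $f_a(x^\star)=f_{a_{i_0}}(x^\star)$, and likewise for $b$. The paper phrases it as a contradiction from assuming $x^\star$ lies in no pairwise Pareto set, while you show membership in the specific set $\PS^{((f_{a_{i_0}},f_{b_{j_0}}),C)}$ directly; the content is the same.
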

\begin{proof}
Let $x^\star$ be Pareto-optimal for the bi-objective multipeak problem. By definition, $x^\star \in C$. Assume that it does not belong to $\bigcup_{i,j} \PS^{((f_{a_i},f_{b_j}),C)} $, i.e., it does not belong to any $\PS^{((f_{a_i},f_{b_j}),C)}  $.
Using the definition of $f_a$ and $f_b$, let $i_0$ and $j_0$ be the indexes of some active peaks in $x^\star$ (at least one exists per objective), i.e., such that $f_a(x^\star) = f_{a_{i_0}}(x^\star)$ and $f_b(x^\star) = f_{b_{j_0}}(x^\star)$. As assumed above (to get a contradiction), $ x^\star \notin \PS^{((f_{a_{i_0}},f_{b_{j_0}}),C)}  $. Thus, there exists $z \in C$ such that WLG $f_{a_{i_0}}(z) < f_{a_{i_0}}(x^\star)$ and $f_{b_{j_0}}(z) \leq   f_{b_{j_0}}(x^\star)$ (the strict inequality could hold for the second objective instead). However, since $f_a(z) = \min_i f_{a_{i}}(z) \leq f_{a_{i_0}}(z)$, and similarly $f_b(z) \leq f_{b_{j_0}}(z)$, the solution $z$ is such that $f_a(z) < f_a(x^\star) $ and $f_b(z) \leq f_b(x^\star)$, which contradicts that $x^\star$ is Pareto-optimal for $(f_a,f_b)$.
\end{proof}
\noindent Using \Cref{prop:ND-incl}, we can deduce that the Pareto set of the constrained multipeak problem $((f_a,f_b),C)$ corresponds to non-dominated solutions of the set $\bigcup_{i,j} \PS^{((f_{a_i},f_{b_j}),C)}$ as stated in the next corollary.
\begin{corollary}\label{cor:multipeak_nondominated}
Consider a bi-objective multipeak problem $(f_a,f_b)$ and a non-empty feasible set $C$. Assume that the problem satisfies the domination property (for instance, each function can be the minimum of strictly convex-quadratic functions as in \Cref{prop:multipeakCQ}).
Then, 
the Pareto set of the  constrained multipeak problem $((f_a,f_b),C)$ corresponds to the set of non-dominated solutions from the union of pairwise Pareto sets of the peaks, i.e.,
$
\PS^{((f_a, f_b),C)} = \ND^{((f_a, f_b),C)} \left(  
\bigcup_{i,j} \PS^{((f_{a_i},f_{b_j}),C)}
\right).
$
\end{corollary}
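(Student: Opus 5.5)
The corollary follows by combining \Cref{prop:PS_multipeaks_constrained} with \Cref{prop:ND-incl}, applied with $f=(f_a,f_b)$ and $K=\bigcup_{i,j} \PS^{((f_{a_i},f_{b_j}),C)}$. \Cref{prop:ND-incl} has three hypotheses: the domination property holds for $((f_a,f_b),C)$; the Pareto set $\PS^{((f_a,f_b),C)}$ is non-empty; and $K$ satisfies $\PS^{((f_a,f_b),C)} \subset K \subset C$. Once these are checked, the proposition immediately gives $\PS^{((f_a,f_b),C)} = \ND^{((f_a,f_b),C)}(K)$, which is the claim.

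First, the domination property is assumed directly. In the concrete convex-quadratic multipeak setting it is provided by \Cref{prop:multipeakCQ}, provided $C$ is closed. Second, the inclusion $\PS^{((f_a,f_b),C)} \subset K$ is exactly the statement of \Cref{prop:PS_multipeaks_constrained}. Third, $K \subset C$ holds because each pairwise Pareto set $\PS^{((f_{a_i},f_{b_j}),C)}$ consists by definition of feasible points, so every element of the union lies in $C$.

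The only remaining point, and the one I expect to need some care, is non-emptiness of $\PS^{((f_a,f_b),C)}$, which \Cref{prop:ND-incl} assumes. I would derive it from the domination property together with $C\neq\emptyset$. Take any $x\in C$. If $x$ is Pareto-optimal, the Pareto set is non-empty. Otherwise $x \in C\setminus \PS^{((f_a,f_b),C)}$, and the domination property yields some $z\in\PS^{((f_a,f_b),C)}$ with $z\prec x$; in particular the Pareto set is again non-empty.

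Alternatively, if one prefers to avoid the non-emptiness hypothesis, the proof of \Cref{prop:ND-incl} can be rerun directly. The inclusion $\PS\subset\ND(K)$ uses only $\PS\subset K\subset C$. The reverse inclusion uses only the domination property and $\PS\subset K$. Either route completes the proof.
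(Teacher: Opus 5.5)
Your proposal is correct and is the paper's proof: a direct application of \Cref{prop:ND-incl}, with $K=\bigcup_{i,j}\PS^{((f_{a_i},f_{b_j}),C)}$, to the inclusion given by \Cref{prop:PS_multipeaks_constrained}. You also make explicit two hypotheses the paper leaves implicit, namely $K\subset C$ and the non-emptiness of $\PS^{((f_a,f_b),C)}$, and you correctly derive the latter from the domination property together with $C\neq\emptyset$.
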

\begin{proof}
    Direct application of \Cref{prop:ND-incl} to the result of \Cref{prop:PS_multipeaks_constrained}.
\end{proof}
\noindent This corollary reflects how we later compute the Pareto set approximation of the multipeak problem where each single-peak function is a strictly convex-quadratic function and the feasible set is convex. In this case, for some well-chosen feasible sets, we are able to efficiently compute an approximation of the Pareto set of the pairwise objectives as the projection of the unconstrained Pareto set onto the boundary of the feasible set (see \Cref{theo:mainCQ}). We  then obtain the approximation of the Pareto set of the multipeak problem by finding the non-dominated solutions of the union of the single-peak Pareto set approximations. \Cref{subsec:multipeak} details this approach further and \Cref{sec:visualinspection} gives some concrete examples.

\hide{
\luka{I have included a proof attempt of the property required for Corollary~\ref{cor:multipeak_nondominated} to hold, assuming a compact feasible set and continuous objectives.}
\begin{proposition}
Let our feasible set $C$ be compact and let our objective $f=(f_1,\dots,f_n)$ be such that each 
$f_k$ is continuous. 
Then for every $x \in C$, either $x$ is Pareto-optimal, or there exists 
a Pareto-optimal point $y \in C$ such that $y \prec x$.
\end{proposition}

\begin{proof}
Let $x \in C$. Define inductively, for $k=1,\dots,n$,
\[
A_k
=
\left\{
a \in C
\;\middle|\;
a \preceq x,\
f_1(a)=m_1,\dots,f_{k-1}(a)=m_{k-1}
\right\},
\]
and
\[
m_k
=
\min \{ f_k(a) \mid a \in A_k \}.
\]
First we show that $A_k$ and $m_k$ are well defined. It suffices to prove that the sets $A_k$ are non-empty and compact, since then due to continuity of $f_k$, each $m_k$ is the minimum of a non-empty compact set, hence well defined.

We prove by induction on $k$ that $A_k$ is non-empty and compact.

\medskip
\noindent
\textbf{Basis of induction.}
\[
A_1 = \{ a \in C \mid a \preceq x \}.
\]
This set is non-empty, since $x \in A_1$.  It is closed since it can be written as
\[
A_1 = \bigcap_{k=1, \dots, n} \{a \in C \mid a_k \leq x_k\}.
\]
It is then compact, as it is a closed subset of the compact set $C$.

\medskip
\noindent
\textbf{Inductive step.}
Suppose that $A_k$ is non-empty and compact. Then
\[
A_{k+1}
=
\{ a \in C \mid f_k(a)=m_k \} \cap A_k.
\]
Since $f_k$ is continuous, it achieves its minimum on the compact set $A_k$. 
Thus there exists $a \in A_k$ such that $f_k(a)=m_k$, and therefore 
$A_{k+1}$ is non-empty. Moreover, $A_{k+1}$ is compact as the intersection 
of compact sets.

By induction, all sets $A_k$ are non-empty and compact.

Let $y \in A_n$. Then $y \preceq x$ and
\[
f_k(y)=m_k, \qquad k=1,\dots,n.
\]

We claim that $y$ is Pareto-optimal. 
Suppose that it is not. Then there exists a point $r \in C$ that dominates $y$ and an index $u$ such that
\[
f_k(r)=f_k(y)=m_k, \quad k=1,\dots,u-1,
\]
and
\[
f_u(r) < f_u(y)=m_u.
\]
But then $r \in A_u$ and
\[
f_u(r)
<
\min \{ f_u(a) \mid a \in A_u \},
\]
which is impossible.

Therefore $y$ is Pareto-optimal. 
If $y \neq x$, then $y \prec x$. Otherwise $y=x$, and $x$ is Pareto-optimal.
\end{proof}

}

\subsubsection{Pareto set of problems with multipeak-based feasible sets}
\label{subsec:extension_multiconstraints}

So far, we have either considered problems with convex feasible sets or made no assumption on the feasible set. 
A source of difficulty in optimization comes from non-convex disconnected feasible sets. In this section, we discuss the construction of multiobjective problems with such feasible sets.

We have seen in \Cref{prop:union-feasible} that the Pareto set of a multiobjective problem whose feasible set is a union of  sets is included in the union of the Pareto sets associated with each set. We also saw in \Cref{lem:feasible-mp} that a feasible set can equivalently be defined either as the sublevel set of a multipeak function or as the union of the sublevel sets corresponding to the function defining each peak.
Hence, we can use a multipeak function as a constraint to obtain a feasible set given by the union of sets, as summarized in the next corollary. 

\begin{corollary}\label{cor:multipeak_nondominated_constraint}
    Consider a family of multipeak constraints $\{g_{u} = \min (g_{u, 1},\ldots,$ $g_{u, l_u}), u = 1, \dots, d\}$ (i.e., each $g_{u,j}$ is a multipeak function) and define the feasible set associated to the subconstraint $g_{i, j}$ of the multipeak constraint $g_{i}$ as $C_{i, j} = \{ x | g_{i, j}(x) \leq 0 \}$. Then, the feasible set of a problem with these multipeak constraints is equal to 
    $C=\bigcap_{i = 1}^{d} \bigcup_{j = 1}^{l_i} C_{i, j} = \bigcup_{j_1 = 1}^{l_1} \dots \bigcup_{j_d = 1}^{l_d} \bigcap_{i = 1}^d C_{i, j_i}$.
    Additionally, assume a problem $(f,C)$ that satisfies the domination property. Then, the Pareto set of $(f,(g_1, \dots, g_d))$ is equal to
    $
\PS^{(f,g)} = \ND^{(f,g)}\left( \bigcup_{j_1 = 1}^{l_1} \dots \bigcup_{j_d = 1}^{l_d} \PS^{(f,(g_{1, j_1}, \dots, g_{d, j_d}))} \right).
    $
\end{corollary}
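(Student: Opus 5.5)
The proof has two parts: a set identity for the feasible set, and then the Pareto set characterization obtained by combining \Cref{prop:union-feasible} with \Cref{prop:ND-incl}.

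\emph{Feasible set.} By definition the feasible set is the intersection over $u=1,\dots,d$ of the feasible sets of the constraints $g_u$. By \Cref{lem:feasible-mp}, the feasible set of $g_u=\min(g_{u,1},\ldots,g_{u,l_u})$ equals $\bigcup_{j=1}^{l_u} C_{u,j}$. Hence
\[
C=\bigcap_{i=1}^d \bigcup_{j=1}^{l_i} C_{i,j}.
\]
The second expression follows from distributivity of intersection over finite unions. A point $x$ lies in the left-hand side if and only if for every $i$ there is an index $j_i$ with $x\in C_{i,j_i}$. This holds if and only if there exists a tuple $(j_1,\ldots,j_d)$ with $x\in\bigcap_i C_{i,j_i}$, which is membership in the iterated union. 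Both inclusions come from this single equivalence, so the step is routine.

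\emph{Inclusion in the union of sub-Pareto sets.} Set $D_{j_1,\ldots,j_d}=\bigcap_i C_{i,j_i}$. By \Cref{lem:maximumconstraints}, $D_{j_1,\ldots,j_d}$ is exactly the feasible set of the problem $(f,(g_{1,j_1},\ldots,g_{d,j_d}))$. Since $C=\bigcup_{(j_1,\ldots,j_d)} D_{j_1,\ldots,j_d}$, \Cref{prop:union-feasible} gives
\[
\PS^{(f,g)}\subset K:=\bigcup_{j_1,\ldots,j_d}\PS^{(f,(g_{1,j_1},\ldots,g_{d,j_d}))}.
\]
Each sub-Pareto set lies in its own feasible set $D_{j_1,\ldots,j_d}\subset C$, so $K\subset C$.

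\emph{Conclusion.} The problem $(f,C)$ satisfies the domination property by assumption, and $K\subset C$ contains $\PS^{(f,g)}$. \Cref{prop:ND-incl} then yields $\PS^{(f,g)}=\ND^{(f,g)}(K)$, which is the claimed formula.

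\emph{Main obstacle.} The only delicate point is that \Cref{prop:ND-incl} requires the Pareto set to be non-empty. I would handle this inside the domination property. If $C$ is non-empty and $\PS^{(f,C)}$ were empty, the domination property would fail, since for any $x\in C$ it asks for a dominating $z$ in the Pareto set. So non-emptiness of $\PS^{(f,C)}$ follows whenever $C\neq\emptyset$. If $C=\emptyset$, then both sides of the formula are empty and the statement is trivial.
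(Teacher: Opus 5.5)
Your proof is correct and follows the same route as the paper, whose proof is a one-line citation of \Cref{lem:feasible-mp} and \Cref{prop:ND-incl}. You also spell out two steps that the paper's proof leaves implicit: the inclusion $\PS^{(f,g)}\subset K$, obtained via \Cref{prop:union-feasible}, and the non-emptiness hypothesis of \Cref{prop:ND-incl}, which you correctly derive from the domination property (or else dispose of when $C=\emptyset$).
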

\begin{proof}
The proof is a direct consequence of \Cref{lem:feasible-mp} and \Cref{prop:ND-incl}.
\end{proof}

\noindent We can use the above corollary to identify the Pareto set of a problem with objective function $f$ and constraint function $\min g_{a_i}$, by finding the Pareto set of each constrained problem $(f,g_{a_i})$, computing the union of those sets and only keeping the non-dominated solutions from this union.

To compute the Pareto set of each $(f, g_{a_i})$ efficiently, we will choose the objectives $f$ to be either strictly convex-quadratic or multipeak (composed of strictly convex-quadratic) functions and the constraints $g_{a_i}$ to be either linear or convex-quadratic.

\subsection{Computing the ideal and nadir points}\label{sec:ideal-nadir}
In \Cref{def:ideal-nadir}, we define the ideal and nadir points, which are central for certain performance assessment techniques, such as normalization and reference point selection. In the problem setting of \Cref{theo:mainCQ}, they can be easily computed  without resorting to using the approximation of the entire Pareto set. More precisely, when
$\theta \in \Sm$ selects a single objective, i.e., $\theta_i = 1$ and $\theta_j = 0$ for all $j \neq i$, we obtain $H_\theta = H_i$ and $c_\theta = c_i$. In this case, the projection $p_i = \Proj_{C,\|.\|_{H_i}} (c_i)$ of $c_i$ coincides with a minimizer of $f_i$ over $C$.
Since $p_i \in \PS^{(f, C)}$ (by \Cref{theo:mainCQ}), it follows that $p_i$ is a minimizer of $f_i$ over $\PS^{(f, C)}$, i.e., $\min_{x \in \PS^{(f, C)}}f_i(x) = f_i(p_i)$. Therefore, the ideal point of the $m$-objective problem described in \Cref{theo:mainCQ} can be computed using only the solutions $p_1, \dots, p_m$.
See also Figure~\ref{fig:projection} for an illustration.
In the following proposition, we show that even for more general bi-objective problems (not necessarily convex-quadratic), the nadir point can be computed only from the minima of each objective over the Pareto set, if they exist.
\begin{proposition}\label{prop:bi_nadir}
Consider a bi-objective problem and denote by $\PS$ its Pareto set. 
Suppose that there exist $x_1, x_2 \in \PS$ attaining the minimal values of the two objectives, i.e.,
$
f_1(x_1) = \min_{x \in \PS} f_1(x)$, $
f_2(x_2) = \min_{x \in \PS} f_2(x). $
Then the nadir point of the problem defined as $z^{\mathrm{nad}}=(\sup_{x \in 
\PS} f_1(x), \sup_{x \in 
\PS} f_2(x))$ equals
$
\bigl(
f_1(x_2),\,
f_2(x_1)
\bigr).
$
\end{proposition}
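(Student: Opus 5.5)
To prove \Cref{prop:bi_nadir}, I would show separately that $\sup_{x \in \PS} f_1(x) = f_1(x_2)$ and $\sup_{x \in \PS} f_2(x) = f_2(x_1)$. The two claims are symmetric, so it suffices to treat the first one. The basic tool is the following property of Pareto sets in the bi-objective case: for any two Pareto-optimal solutions $x, y \in \PS$, the relation $f_2(x) \le f_2(y)$ forces $f_1(x) \ge f_1(y)$.

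First I would establish this trade-off property. Suppose $f_2(x) \le f_2(y)$ and, for contradiction, $f_1(x) < f_1(y)$. Then $x \preceq y$, and the strict inequality in the first coordinate gives $x \prec y$. Since $x \in \PS$ is feasible, this contradicts the Pareto optimality of $y$. This uses only the definition of Pareto optimality; no convexity or other structure is needed.

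Next, let $x \in \PS$ be arbitrary. By the assumption on $x_2$, we have $f_2(x_2) = \min_{z\in\PS} f_2(z) \le f_2(x)$. Applying the trade-off property with the pair $(x_2, x)$ gives $f_1(x_2) \ge f_1(x)$. Hence $f_1(x_2)$ is an upper bound of $f_1$ over $\PS$. Since $x_2 \in \PS$, this bound is attained, so $\sup_{x\in\PS} f_1(x) = f_1(x_2)$; in fact the supremum is a maximum. The symmetric argument, using $x_1$ and swapping the roles of the objectives, gives $\sup_{x\in\PS} f_2(x) = f_2(x_1)$. Together these yield $z^{\mathrm{nad}} = (f_1(x_2), f_2(x_1))$.

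I do not expect a genuine obstacle. The only subtle point is the direction of the inequality in the trade-off step. Ties in $f_2$ among several minimizers are harmless: if $f_2(x) = f_2(x_2)$ and $f_1(x) \ne f_1(x_2)$, one of the two points would dominate the other. So all minimizers of $f_2$ on $\PS$ share the same $f_1$-value, and the choice of $x_2$ does not matter. The existence of the minimizers $x_1$ and $x_2$ is exactly the hypothesis of the proposition, so nothing further needs to be verified.
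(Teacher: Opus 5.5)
Your proposal is correct and follows essentially the same argument as the paper. Both rest on the fact that no $x$ in the Pareto set can be dominated by $x_2$, which together with $f_2(x_2) \le f_2(x)$ forces $f_1(x) \le f_1(x_2)$; the paper phrases this as a contradiction from assuming $f_1(y) > f_1(x_2)$, and then handles the second coordinate symmetrically.
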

\begin{proof}
We show that $f_1(y) \leq f_1(x_2)$ for all $y \in \PS$. Suppose, for a contradiction, that there exists $y \in \PS$ such that $f_1(y) > f_1(x_2)$. Since $y \in \PS$, it cannot be dominated by $x_2$, therefore $f_2(y) < f_2(x_2) = \min_{x \in \PS} f_2(x)$, a contradiction. It follows that $f_1(x_2) = \max_{x \in \PS} f_1(x)$. Similarly $f_2(x_1) = \max_{x \in \PS} f_2(x)$.
The proposition then follows.
\end{proof}
\noindent \Cref{prop:bi_nadir} implies that for the problem described in \Cref{theo:mainCQ} with two objectives, the nadir point can be computed using only $p_1$ and $p_2$. Indeed, as for the ideal point we can use that $f_1(p_1)=\min_{x
\in \PS} f_1(x)$ such that the second coordinate of the nadir point equals $f_2(p_1)$. Similarly, the first coordinate of the nadir point equals $f_1(p_2)$.

\hide{
\begin{proposition}\label{prop:multi_ideal}
Consider a multiobjective problem $(f, C)$, whose Pareto set $\PS$ is \del{a}\new{the} \new{set of} non-dominated \del{set}\new{vectors} of the union of Pareto sets of subproblems whose feasible sets are contained in non-empty feasible set $C$ and whose ideal points exist. We denote \del{with}\new{by} $I$ the set that contains exactly all of these ideal points of subproblems. Furthermore, assume that $(f, C)$ satisfies the domination property. Then the ideal point of $(f, C)$ is
\[
\left(\min_{x \in I} x_1, \dots, \min_{x \in I} x_m\right),
\]
where $x_j$ denotes the $j$-th component of the vector $x$.
\end{proposition}}

Consider now multipeaks problems as in \Cref{cor:multipeak_nondominated} and \Cref{cor:multipeak_nondominated_constraint}. In this case, the ideal point can be computed using the following proposition.

\begin{proposition}\label{prop:multi_ideal}
Consider a multiobjective problem $(f, C)$, whose Pareto set $\PS$ satisfies \maybemath{\PS=\ND^{(f, C)}\left(\bigcup_i \PS^i\right),} where $
\PS^i$ are Pareto sets of subproblems whose feasible set is contained in a non-empty feasible set $C$ and whose ideal point denoted by $I_i (\in 
\R^m)$ exists. Assume that $(f,C)$ satisfies the domination property. Then the ideal point of $(f,C)$ equals $(\min_{z 
\in I} z_1, \ldots,\min_{z 
\in I} z_m)$, where $I=\bigcup_i\{I_i\}$.
\end{proposition}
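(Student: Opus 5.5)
We prove the equality one coordinate $k\in\{1,\ldots,m\}$ at a time. Set $\mu_k = \min_{z\in I} z_k = \min_i (I_i)_k$; this is a minimum over a finite index set, as in the multipeak corollaries. The claim is then $\inf_{x\in\PS} f_k(x) = \mu_k$, which we get from two inequalities.

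\emph{Lower bound.} Take $x\in\PS$. By the hypothesis $\PS=\ND^{(f,C)}(\bigcup_i \PS^i)$, we have $x\in\PS^{i}$ for some $i$. By \Cref{def:ideal-nadir} applied to subproblem $i$, $f_k(x)\ge \inf_{y\in\PS^i} f_k(y) = (I_i)_k \ge \mu_k$. Taking the infimum over $x\in\PS$ gives $\inf_{\PS} f_k \ge \mu_k$.

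\emph{Upper bound.} Fix the index $i$ with $(I_i)_k=\mu_k$ and let $\varepsilon>0$. By the definition of $(I_i)_k$ as an infimum, there is $y\in\PS^i$ with $f_k(y)\le \mu_k+\varepsilon$. Since the feasible set of subproblem $i$ is contained in $C$, we have $y\in C$, and so exactly one of two cases holds.
\begin{itemize}
\item If $y\in\PS$, then $\inf_{\PS} f_k\le f_k(y)$.
\item If $y\notin\PS$, the domination property of $(f,C)$ gives $z\in\PS$ with $z\prec y$. In particular $f_k(z)\le f_k(y)$.
\end{itemize}
In both cases $\inf_{\PS} f_k\le \mu_k+\varepsilon$. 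Letting $\varepsilon\to 0$ yields the reverse inequality, and combining with the lower bound proves the proposition.

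The main subtlety is the upper bound. Near-minimizers $y$ of the subproblems need not themselves belong to $\PS$, since they may be dominated by points coming from other subproblems. This is exactly where the domination property is needed: it supplies a point of $\PS$ that is at least as good in every coordinate. For this step we only need $y\in C$, which follows from the containment of the subproblems' feasible sets in $C$. We also use that $\PS$ is non-empty so that its infimum is meaningful; this follows from the same argument, since $\PS^i$ is non-empty whenever its ideal point exists.
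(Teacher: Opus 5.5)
Your proof is correct and is essentially the paper's argument: the lower bound comes from every point of $\PS$ lying in some $\PS^i$, and the upper bound comes from taking a near-minimizer $y \in \PS^i \subset C$ of $f_k$ and invoking the domination property to obtain a point of $\PS$ at least as good in coordinate $k$. The paper phrases the upper bound as showing that no $r > \min_{z\in I} z_k$ is a lower bound, and it locates the relevant subproblem by contradiction rather than by fixing the minimizing index; this is equivalent to your $\varepsilon$-argument.
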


\begin{proof}
Fix $j \in \{1, \dots, m\}$. We have that $\min_{z \in I} z_j$ is a lower bound for $\{f_j(x) \mid x \in \PS\}$, since every point in $\PS$ also belongs to $\PS^{i}$ for some $i$ and the ideal point of this subproblem, $I_i$, has its $j$-th component greater than or equal to $\min_{z \in I} z_j$.
Furthermore, there exists no greater lower bound, since for every $r > \min_{z \in I} z_j$, at least one subproblem $i$ has $y \in \PS^i$ with $f_j(y)<r$. This is true, since if for some subproblem $i$, we have $f_j(x) \geq r$ for all $x \in \PS^i$, then the $j$-th component of $I_i$ is greater than or equal to $r$, since $r$ is a lower bound for $\{f_j(x)\mid x \in \PS^i\}$. If we would have $f_j(x) \geq r$ for all $x \in \PS^i$ for all subproblems $i$, then all ideal points would have their $j$-th component greater than or equal to $r$. Then we would have, $r \leq \min_{z \in I} z_j$, which would be a contradiction. Since $y \in C$, it follows from the domination property that there exists some $p \in \PS$ such that $f_j(p) \leq f_j(y) < r$, which means that $r$ is not a lower bound for $\{f_j(x) \mid x \in \PS\}$. Therefore, $\inf_{x\in\PS} f_j(x) = \min_{z \in I} z_j$.
\end{proof}

\noindent We have seen that the nadir point of a bi-objective problem can be computed using the Pareto-optimal solutions that minimize the individual objectives, by \Cref{prop:bi_nadir}. Suppose that we have a problem as described in \Cref{prop:multi_ideal} (which includes multipeak problems), where the Pareto sets of our subproblems are $\PS^j$ and the corresponding ideal points are $(f_1(x_{1,j}), f_2(x_{2,j}))$ with $x_{1,j}, x_{2,j} \in \PS^j$ for $j = 1,\dots,d$. Then, the ideal point is $(f_1(x_{1,l}), f_2(x_{2,k}))$ for some $l,k \in \{1,\dots,d\}$ (by \Cref{prop:multi_ideal}) with $x_{1,l}, x_{2,k} \in \PS$. We can indeed ensure that $x_{1,l} \in \PS$, since we can choose $l$ such that $f_1(x_{1,l}) = \min_{j=1,\dots,d}f_1(x_{1, j})$ and $f_2(x_{1,l})$ is the smallest possible among all such choices of $l$. Then, if $x_{1, l}$ is dominated, it is dominated by some $y \in \PS$ (domination property). Such $y$ must satisfy $f_1(y) = f_1(x_{1,l})$ and $f_2(y) < f_2(x_{1,l})$. Since $y \in \PS^j$ for some $j$ (because $\PS=\ND^{(f, C)}(\cup \PS^i)$), we have $y \prec x_{1,j}$, a contradiction since $x_{1,j} \in \PS^j$ by assumption. Similarly, we can choose $x_{2,k} \in \PS$. Thus, we can compute the nadir point of such a bi-objective problem using only the points $x_{1,j}, x_{2,j}$, by \Cref{prop:bi_nadir}.

\section{\COBI: A generator of test problems with analytically known or efficiently computable Pareto sets}\label{sec:testfunctiongenerator}
Using the theoretical analysis of the previous section, we can mathematically characterize the Pareto sets of a wide variety of constrained multiobjective problems. This includes cases with strictly convex or multipeak objectives and arbitrary constraint functions leading to feasible sets that are unions of closed convex sets.
From this foundation, we construct scalable test problems that combine unimodal and multimodal objective functions with linear and convex-quadratic constraints as well as constraints that have disconnected feasible sets  (whose connected components we will call regions).
Moreover, both objective and constraint functions can be parameterized, for example, by varying the location and orientation of their level sets or by applying strictly increasing transformations of the objectives and sign-preserving transformations of the constraints since we have seen in \Cref{sec:invariance} that such transformations preserve the Pareto set. To ease performance assessment, we aim at problem classes with Pareto sets that are numerically and efficiently computable, while still allowing for challenging practically relevant properties such as ill-conditioning, non-separability and multimodality.

In this context, we define below a parameterizable \COBI{} problem generator.  Depending on the chosen parameters, we present in Section~\ref{sec:approximations} methods for efficiently approximating the Pareto sets of the resulting problems with almost arbitrary precision. Section~\ref{sec:visualinspection} visually investigates the \COBI problem properties in more detail.

Note that the \COBI problem generator allows to instantiate bi-objective problems only (hence, its abbreviation for ``Constrained Biobjective Problems''). Although the definitions of the test problems can be generalized to an arbitrary number of objectives, our implementation\footref{foot:cobi} is restricted to two objectives, as we also provide code to numerically approximate the Pareto set efficiently, see \Cref{sec:approximations}.

The most general constrained problem that the test problem generator can produce is the following $n$-dimensional, bi-objective problem with $p$ constraints:\\[-1.5em]
\begin{equation}\label{eq:problem_approx}
\begin{aligned}
 \mbox{minimize } & f(x) = \left(\Phi_1\left(\min_{i \in \{1,\ldots,s_1\}} f_{1, i}(x)\right), \Phi_2\left(\min_{i\in \{1,\ldots,s_2\}} f_{2, i}(x)\right) \right)\\
 \mbox{subject to } & \tau_k\left(g_k(x)\right) \leq 0,\quad k=1,\ldots, p,
\end{aligned}
\end{equation}
where the two objective functions are of the multipeak type, i.e.,
\begin{equation*}
f_{\alpha, i}(x) = \Upsilon_{\alpha, i}\left(\frac12(x - c_{\alpha, i})^\top H_{\alpha, i} (x - c_{\alpha, i}) \right),\quad \alpha \in \{1,2\} \text{ and } i=1, \ldots, s_{\alpha},
\end{equation*}
with 
$c_{\alpha, i} \in \R^n$ the individual optima of convex-quadratic functions, $H_{\alpha, i} \in \R^{n\times n}$ symmetric positive definite matrices and $s_{\alpha}$ the number of peaks in objective $\alpha$. 
The functions $\Phi_\alpha$ and $\Upsilon_{\alpha, i}$ are arbitrary strictly increasing transformations and $\tau_k$ are sign-preserving transformations. As for the inequality constraints $g_k$, $k=1,\ldots, p$, we restrict ourselves to linear functions of the form $g_k^{\text{lin}} = a_k^\top x + b_k$, convex-quadratic functions of the form $g_k^{\text{c-q}} = \frac12(x - c_k)^\top H_k (x - c_k) - d_k$, and their combinations as 
\begin{equation}\label{eq:const-min}
g_k^{\min} = \min \left\{\min_{k_u} \tau_{k_u}\left(g_{k_u}^{\text{lin}}\right), \min_{k_v} \tau_{k_v}\left(g_{k_v}^{\text{c-q}}\right)\right\}, 
\end{equation}
with $\tau_{k_u}$ and $\tau_{k_v}$ being arbitrary sign-preserving transformations. The feasible set of $g_k^{\min}$ equals the union of the feasible sets of the individual constraint functions $g_{k_u}^{\text{lin}}$ and $g_{k_v}^{\text{c-q}}$ (\Cref{lem:feasible-mp}).
In the above, $a_k, c_k \in \R^n$, $b_k \in \R$, $d_k \in \R_+$ and the $H_k \in \R^{n \times n}$ are symmetric positive definite matrices (with $k\in\{1,\ldots,p\}$).
Note that the linear constraints allow to generate box-constrained \COBI problems.

The above used transformations do not alter the Pareto set (see \Cref{prop:transformations_invariant}). However, they allow us to generalize the class of problem difficulties and to model some properties encountered in real-world problems. For example, the sign-preserving transformation $\tau(x)=1$ if $x>0$, and $\tau(x)=0$ if $x\le 0$, 
applied to a constraint retains only information about feasibility and discards all information about how close a solution is to the boundary of the feasible space. This models real-world scenarios in which a solution is evaluated via a simulation that either succeeds or fails without additional feedback about the direction towards feasible solutions.
Optimization problems also often naturally involve non-linear transformations of the objective function(s), for example, to transform multiplicative relationships into linear ones by a strictly increasing transformation
$\Phi(x) = \log_a(x)$, where $a > 1$.
The \COBI test problem generator allows to model such problems as well. Additionally, strictly increasing transformations of the objective functions allow to change the shape of the Pareto front towards highly steep ones, often observed in practice~\cite{kenny2025multi}, as we will see later in Section~\ref{sec:visualinspection}.

\section{Numerical Pareto set approximations} \label{sec:approximations}
In the following, we discuss how to numerically approximate the Pareto sets for the constrained multiobjective problems provided by the \COBI problem generator from Section~\ref{sec:testfunctiongenerator} and give details about our concrete implementation\footref{foot:cobi}. The Pareto set approximations are computed by successively solving the underlying scalarized problems numerically. Since, by \Cref{prop:transformations_invariant}, the Pareto set of a multiobjective problem remains unchanged under strictly increasing transformations $\Phi_\alpha$ of the objective functions and sign-preserving transformations $\tau_k$ of the inequality constraints, we ignore these transformations when approximating the Pareto set.

\subsection{Strictly convex-quadratic objectives}
\label{subsec:singlepeak}
As the base case, we consider the problem defined in \eqref{eq:problem_approx} with $s_1 = s_2 = 1$, i.e., with a single peak per objective function\footnote{To simplify the notation in this section, we drop the second index in $\cdot_{\alpha, i}$.}, and a closed convex feasible set $C$. We can ignore the strictly increasing transformations $\Phi_\alpha, \Upsilon_{\alpha}$ due to the results of Section~\ref{sec:invariance} and thus our objective functions are strictly convex-quadratic. By \Cref{prop:PS_CQ}, the unconstrained Pareto set of this problem coincides with the curve $h(\theta) : [0, 1] \to \R^n$, defined by
\begin{equation*}
    h(\theta) = \left(\theta H_1 + \left(1 - \theta\right) H_2\right)^{-1}\left(\theta H_1 c_1 + \left(1 - \theta\right) H_2 c_2\right).
\end{equation*}

\noindent To generate an approximation of the constrained Pareto set, we project a set of solutions from the curve $h(\theta)$ onto the feasible set $C$. In the simplest case, we can choose the solutions that correspond to equidistant $\theta$ in the weight space $[0, 1]$, i.e., $\theta_i=(i-1)/(N-1)$ for $i\in\{1, \ldots, N\}$, which yields an approximation with $N$ solutions. The quality of such an approximation, with respect to the Euclidean distance between neighboring solutions, heavily depends on the shape of the Pareto set. 
To amend this, we propose to set the weights so that the resulting solutions will be at most $\epsilon$ (for an arbitrary precision $\epsilon > 0$) apart along the unconstrained Pareto set. The procedure to approximate the Pareto set follows these steps:
\begin{enumerate}
    \item Set $\theta_1 = 0$ and, until a solution $h(\theta_r)$ that is at most $\epsilon$ away from $h(1/2)$ is found, search for the solution $h(\theta_{i+1})$ that is at most $\epsilon$ away from $h(\theta_i)$ by bisection in the weight space $[\theta_i, 1/2]$ for $i = 1, \dots, r$.
    \item Repeat the previous step symmetrically, starting at $1$ and using the weight space $[1/2, \theta_i]$. If the last solutions from this and the previous step are more than $\epsilon$ away, find an additional solution that is $\epsilon$ away from $\theta_r$ by bisection in the weight space $[0, \theta_r]$.
    \item For each $\theta_i$, check whether the solution $h(\theta_i)$ is feasible. If it is, add it directly to the Pareto set approximation $\PS_\epsilon$. Otherwise, project it to the feasible space $C$ by solving the constrained problem
    \begin{equation}\label{eq:const-prob}
        \arg\min_{x\in C} (\theta_if_1(x) + (1-\theta_i)f_2(x))
    \end{equation}
    using a numerical solver. If the solution returned by the solver is feasible, i.e., if the sum of all constraint violations is smaller than $10^{-8}$, add it to $\PS_\epsilon$. If the solver fails to return a solution or returns an infeasible solution (within the specified tolerance)\footnote{Note that in the experiments described later in this paper, the solvers have always successfully returned feasible solutions (within the specified tolerance).}, do not change $\PS_\epsilon$ and continue with the next weight $\theta$.
\end{enumerate}

\noindent When the problem from \eqref{eq:const-prob} has only linear constraints, we use the \href{https://pypi.org/project/daqp/}{\texttt{DAQP}}~\cite{arnstrom2022dual} solver (version 0.7.1) from the {\texttt{\href{https://pypi.org/project/qpsolvers/}{qpsolvers}}}~\cite{qpsolvers} module (version 4.8.0) in Python with default parameters.\footnote{The choice of \href{https://pypi.org/project/daqp/}{\texttt{DAQP}} is based on preliminary experiments with a variety of Python-based solvers (namely \href{https://pypi.org/project/cvxopt/}{\texttt{CVXOPT}}, \href{https://pypi.org/project/daqp/}{\texttt{DAQP}}, \href{https://pypi.org/project/piqp/}{\texttt{PIQP}}, \href{https://pypi.org/project/proxsuite/}{\texttt{ProxQP}}, and \href{https://pypi.org/project/quadprog/}{\texttt{quadprog}} (using the Goldfarb/Idnani dual algorithm), all accessed via the {\texttt{\href{https://pypi.org/project/qpsolvers/}{qpsolvers}}} module), in which \href{https://pypi.org/project/daqp/}{\texttt{DAQP}} showed up as the most precise and most CPU-efficient alternative.} When a problem contains also nonlinear constraints, we use the \href{https://pypi.org/project/scs/}{\texttt{SCS}}~\cite{odonoghue:21} solver (version 3.2.7.post2) from the \href{https://pypi.org/project/cvxpy/}{\texttt{CVXPY}}~\cite{diamond2016cvxpy, agrawal2018rewriting} module (version 1.6.5) in Python with parameters
     $\texttt{eps\_abs} = \texttt{eps\_rel} = \texttt{eps\_infeas} = 10^{-12}$,
     $\texttt{max\_iters} = 10^6$.\footnote{The choice of \href{https://pypi.org/project/scs/}{\texttt{SCS}} is also based on preliminary experiments with a variety of Python-based solvers (namely \href{https://pypi.org/project/ecos/}{\texttt{ECOS}}, \href{https://pypi.org/project/scs/}{\texttt{SCS}}, \href{https://pypi.org/project/Mosek/}{\texttt{MOSEK}}, \href{https://pypi.org/project/gurobipy/}{\texttt{GUROBI}}, all accessed via the \href{https://pypi.org/project/cvxpy/}{\texttt{CVXPY}} module), in which no solver performed notably faster or more precisely than \href{https://pypi.org/project/scs/}{\texttt{SCS}}.}
     
This construction does not guarantee a specific size of the resulting Pareto set approximation explicitly. With decreasing $\epsilon$, however, the size of the set $\PS_\epsilon$ will increase further and further---potentially only limited by numerical issues with the solver that computes the projections.

Note that other constructions are possible (for example, those that ensure solutions are at most $\epsilon$ apart in the \emph{objective} space), but these may be more computationally expensive and are beyond the scope of this paper.

\subsection{Multipeak objectives}
\label{subsec:multipeak}
Now suppose that we have a problem defined in \eqref{eq:problem_approx} with arbitrary $s_1, s_2 \geq 1$, i.e., the objective function can have multiple peaks
and the feasible set $C$ is closed and convex
. To numerically compute the Pareto set approximation $\PS_\epsilon$, starting from an empty set, we follow a similar construction as in~\cite{schaepermeier2023multipeaks} for the unconstrained case:
For each pair of peaks $(i, j) \in \{1, \ldots, s_1\} \times \{1, \ldots, s_2\}$, we compute the approximation of the constrained Pareto set $\PS_{\epsilon, i, j}$ for the problem with objective function $\hat f(x) = (f_{1, i}(x), f_{2, j}(x))$ and the same constraints as in our original problem, using the procedure described in Section~\ref{subsec:singlepeak} with our chosen $\epsilon$. Then, for every solution $x \in \PS_{\epsilon, i, j}$, we check whether it is already dominated by some solution $y \in \PS_\epsilon$. If not, then we remove all solutions in $\PS_\epsilon$ that are dominated by $x$ and add $x$ to $\PS_\epsilon$\footnote{The set of non-dominated solutions is retrieved using the \texttt{BiobjectiveNondominatedSortedList} class from the \href{https://pypi.org/project/moarchiving/}{\texttt{moarchiving}} Python module.}.

Since, as above, with decreasing $\epsilon$, the approximation sets for each peak pair become larger and larger (and thus better and better), together with \Cref{cor:multipeak_nondominated}, we expect that also their union, after removing dominated solutions, becomes a better and better approximation of the true Pareto set.

\subsection{Multipeak constraints}
\label{subsec:multiconstraints}
Let us finally discuss how to approximate the Pareto set when the feasible set is not necessarily convex, but the union of convex sets. Suppose that we have a problem as defined in \eqref{eq:problem_approx} with constraints $g_k^{\min}$ of the form \eqref{eq:const-min}. Note that this is a general case that also covers closed convex feasible sets.

Starting from an empty approximation $\PS_\epsilon$, we approximate the Pareto set as follows. We construct a subproblem by replacing every multipeak constraint $g_k^{\min}$ with one of its subconstraints in $\{g_{k_u}^{\text{lin}}\} \cup \{g_{k_v}^{\text{c-q}}\}$. Since such a subproblem contains only linear and convex-quadratic constraints, its Pareto set can be computed as described in Section~\ref{subsec:multipeak}. We generate all such subproblems (corresponding to all possible combinations of selected subconstraints) and compute their Pareto set approximations. For each solution $x$ in some computed Pareto set approximation, we check whether it is dominated by some solution $y \in \PS_\epsilon$. If it is not, then we remove all solutions in $\PS_\epsilon$ that are dominated by $x$ and add $x$ to $\PS_\epsilon$.

\subsection{Even more general problems}
Following the above and in particular Lemmas~\ref{lem:feasible-mp} and \ref{lem:maximumconstraints}, we can generalize our approach further to any (set of) constraints (and objective functions) that are defined as nested minima and maxima of basic linear and convex-quadratic (sub)functions (or in other words, when the feasible set is the intersection and/or union of convex sets). The same holds even for more general problems as long as we know how to compute the Pareto sets for each combination of subfunctions.
Since our implementation of the proposed \COBI problem generator does not cover these cases, we refrain from providing further details.

\section{A visual inspection of the proposed test problems} \label{sec:visualinspection}
In this section, we visually demonstrate the large variety of problem properties encompassed in the \COBI generator, such as ill-conditioning, non-separability and multimodality of both objectives and constraints. 

Depending on the case, we visualize the problem search space (for $n=2$) and/or its objective space (for $m=2$). The search space plots contain:
\begin{itemize}
    \item the level sets of the two underlying objective functions as thin gray lines,
    \item the peaks' optima, labeled with $c_1$ and $c_2$ for unimodal objectives, and with $c_{\alpha, i}$ for the multimodal objectives (where $\alpha\in\{1,2\}$ indicates the objective and $i$ indicates the number of the peak for this objective),
    \item the unconstrained Pareto set, $\PS^{(f,\R^n)}$, in gray,
    \item the Pareto set of the constrained problem, $\PS^{(f,C)}$, in black,
    \item the boundary of the feasible set as dashed, colored lines (red, blue, yellow, \ldots) and the infeasible set in a lighter shade of the same color, i.e., the feasible set is shown in white.
\end{itemize}
On the other hand, the objective space plots contain the unconstrained Pareto front, $f(\PS^{(f,\R^n)})$, in gray and its constrained counterpart, $f(\PS^{(f,C)})$, in black.

The juxtaposition of unconstrained and constrained Pareto sets and fronts provides a geometric validation of the projection theorem. The plots also help to understand the problem landscapes and thus the difficulties faced by solvers, in particular for the disconnected Pareto sets and fronts in the case of multipeak objectives and constraints.

\subsection{Different types of constraints}
As a first step of our inspection, we make sure that all four types of constrained problems from \cite{ma2019evolutionary} can be generated (see the definitions of those types below).
Figure~\ref{fig:types} showcases some examples.  All of them are non-separable, some are ill-conditioned  ((a), (e) and (g)), some have multimodal objectives ((b), (f) and (h)) and some have multimodal constraints ((e) and (h)).

\begin{figure}[t]
    \centering
    \footnotesize
    \begin{tabular}{@{}c@{}c@{}}
    \includegraphics[width=0.5\linewidth, trim={10pt 10pt 0pt 5pt}, clip]{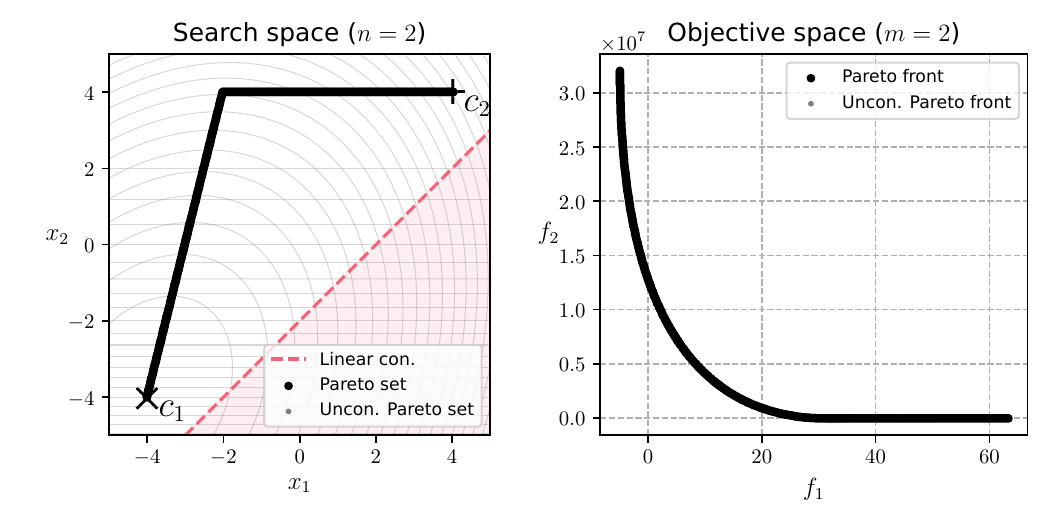}&\includegraphics[width=0.5\linewidth, trim={10pt 10pt 0pt 5pt}, clip]{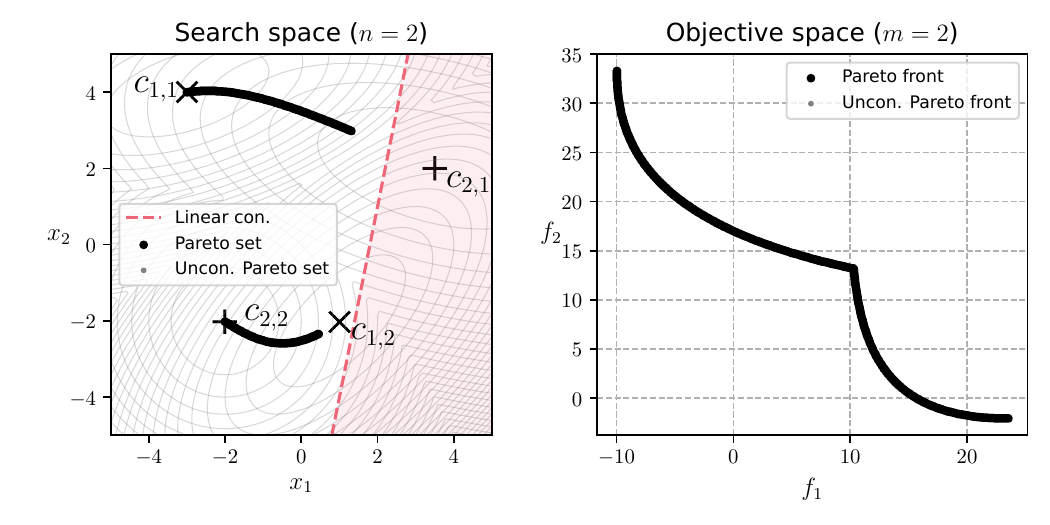}\\
    (a) Type I (unimodal objectives, $f_2$ ill-cond.) & (b) Type I (multimodal objectives)\\[2pt]
    \includegraphics[width=0.5\linewidth, trim={10pt 10pt 0pt 5pt}, clip]{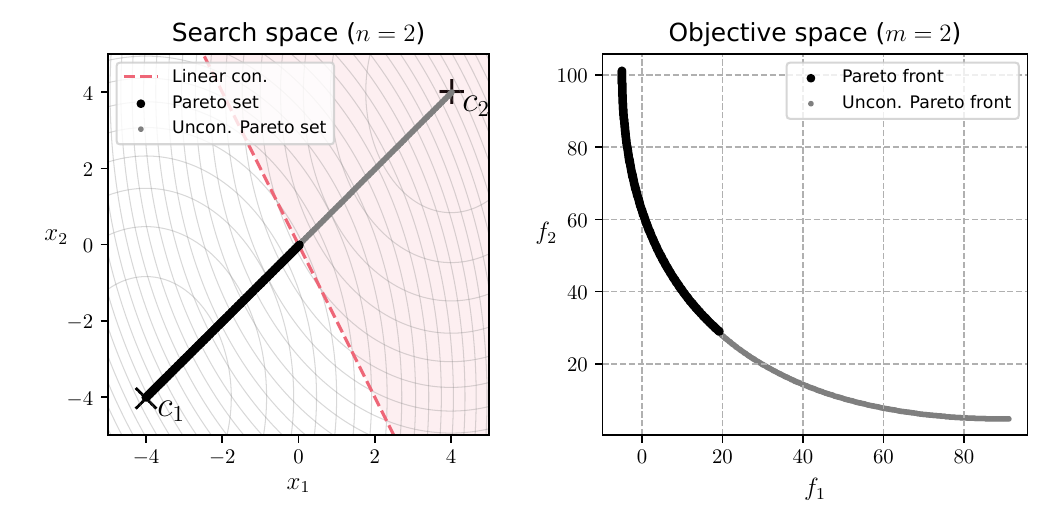} & \includegraphics[width=0.5\linewidth, trim={10pt 10pt 0pt 5pt}, clip]{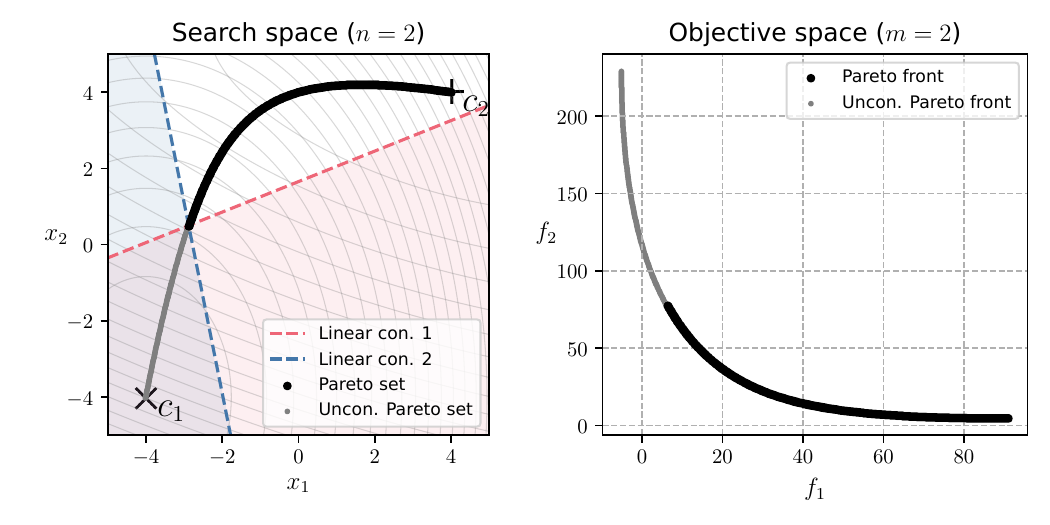}\\
    (c) Type II (single constraint) & (d) Type II  (two constraints)\\[2pt]
    \includegraphics[width=0.5\linewidth, trim={10pt 10pt 0pt 5pt}, clip]{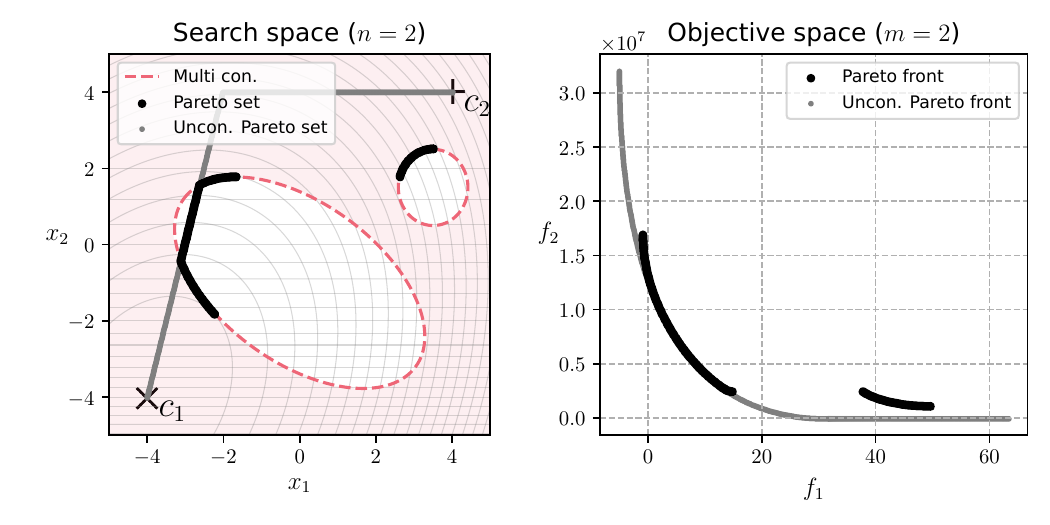} & \includegraphics[width=0.5\linewidth, trim={10pt 10pt 0pt 5pt}, clip]{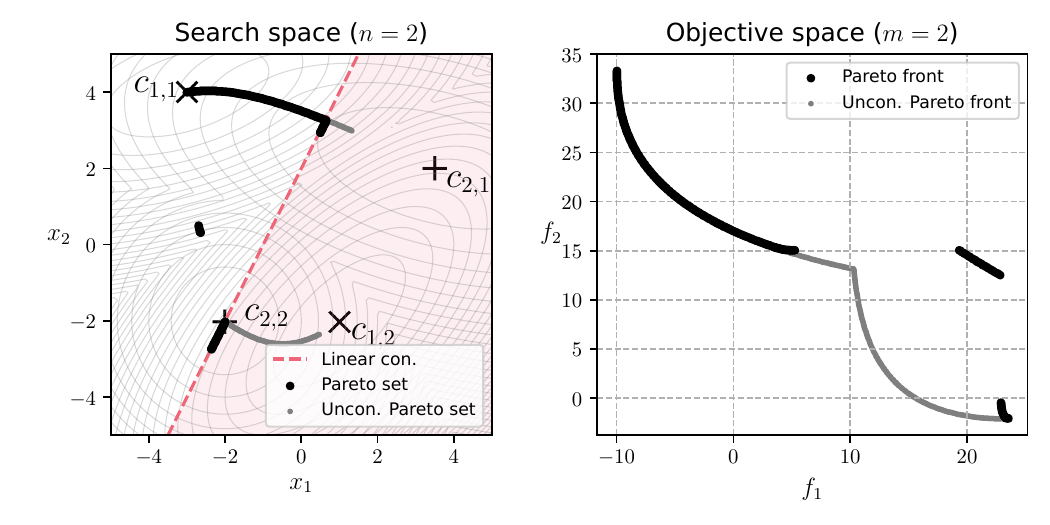}\\
    (e) Type III ($f_2$ ill-cond., multimodal constr.) & (f) Type III (multimodal objectives)\\[2pt]
    \includegraphics[width=0.5\linewidth, trim={10pt 10pt 0pt 5pt}, clip]{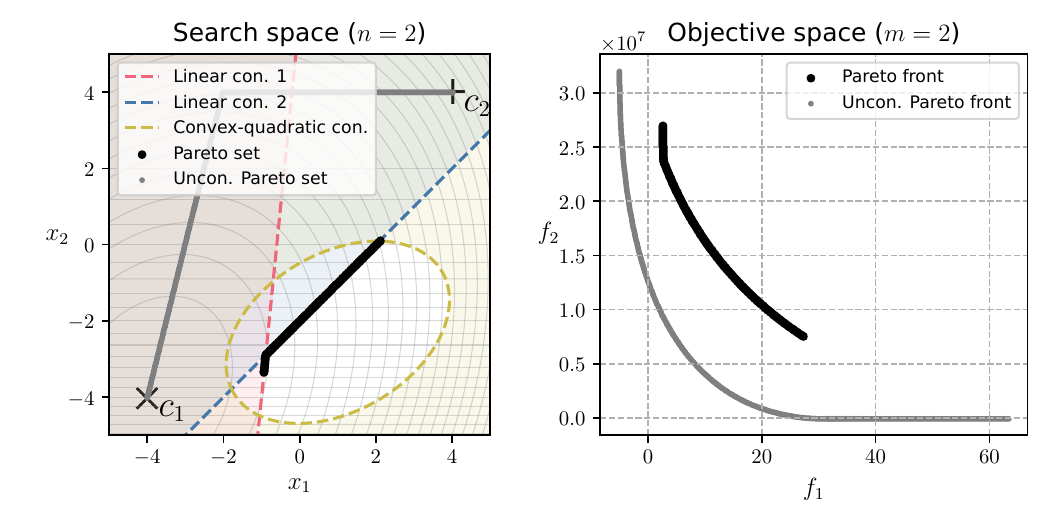} & \includegraphics[width=0.5\linewidth, trim={10pt 10pt 0pt 5pt}, clip]{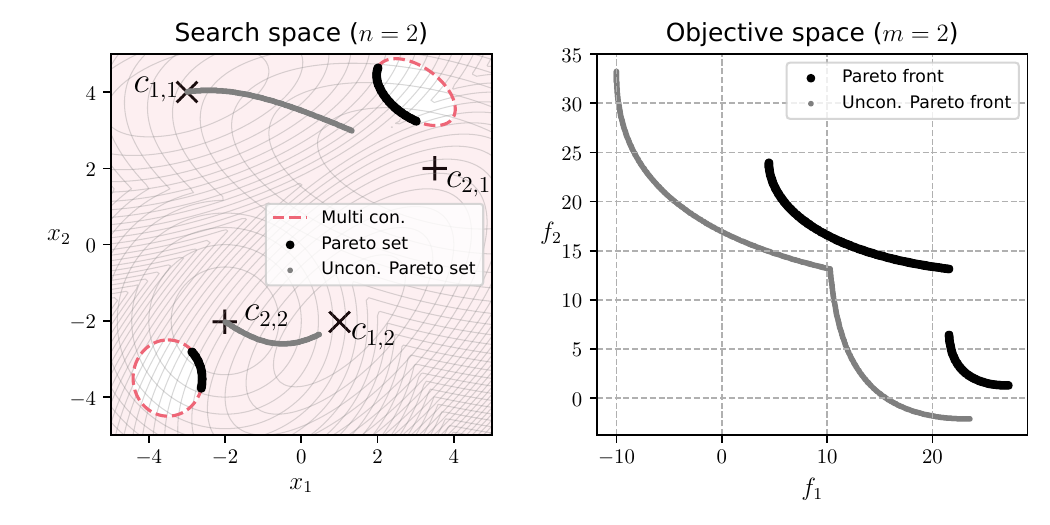}\\
    (g) Type IV ($f_2$ ill-cond., unimodal constr.) & (h) Type IV (multimodal obj.\ and constr.)
    \end{tabular}
    \caption{Four constrained problem types. Type I (first row): The constraints do not change the front. Type II (second row): The Pareto front is a subset of the unconstrained front. Type II (third row): The Pareto front contains part of the unconstrained front plus additional solutions. Type IV (last row): The Pareto front of the constrained and the unconstrained problem are entirely different.}
    \label{fig:types}
\end{figure}

Type I problems are defined as problems where the constraints do not render any unconstrained Pareto-optimal solution infeasible (top row). This is the least interesting case, because no constraint-handling is needed to solve such a problem.\footnote{A multimodal problem could, however, become \emph{easier} to solve if the constraints render locally optimal solutions infeasible.}
Conversely, in Type IV problems, the constraints make all unconstrained Pareto-optimal solutions infeasible (bottom row). In this case, the constrained Pareto front is entirely distinct from the unconstrained one.
In Type II \& III problems, the constraints make only some of the original Pareto-optimal solutions infeasible (middle two rows). According to Theorem~\ref{theo:mainCQ}, the constrained Pareto-optimal solutions are the projections of the unconstrained Pareto-optimal solutions onto the feasible set. As a consequence, we see that Type II  problems (for which a part of the unconstrained Pareto front stays Pareto-optimal while no other Pareto-optimal solution is added) can only occur if all infeasible unconstrained Pareto-optimal solutions are projected back onto the unconstrained Pareto set. This is relatively unlikely, given arbitrary constraints. It can happen, for example, when a linear constraint corresponds to the tangent(s) of the level sets for a given Pareto-optimal solution or when several constraints intersect at a Pareto-optimal solution, see plots (c) and (d) in Figure~\ref{fig:types} respectively. Type III problems, in which the infeasible optimal solutions are projected to new solutions, are more likely to occur than Type II problems. 

\subsection{Different modality of objectives and constraints}
Figure~\ref{fig:multi-multi} shows some additional examples of non-separable problems with multimodal objectives and constraints. 
The first two plots, (a) and (b), show problems with three peaks in the first objective and two in the second. In (a), a single multimodal constraint results in three disconnected Pareto set parts, contained in two of the three feasible regions. When a second convex-quadratic constraint is added, see (b), the Pareto set consists of three parts, each in one of the three (smaller) feasible regions. The third example, (c), shows that multimodal constraints can be used to split the infeasible space into two disconnected regions. Here, the two unconstrained Pareto set parts of the problem with two two-peak objectives are projected into three Pareto set parts. Finally, (d) shows an example of a problem with even higher modality in the objectives (10 peaks in the first objective and 20 in the second) and one multimodal and one linear constraint. This results in seven disconnected feasible regions, of which the smallest four contain Pareto-optimal solutions.

\begin{figure}[t]
    \centering
    \footnotesize
    \begin{tabular}{@{}c@{}c@{}c@{}c@{}}
    \includegraphics[width=0.25\linewidth, trim={20pt 10pt 260pt 5pt}, clip]{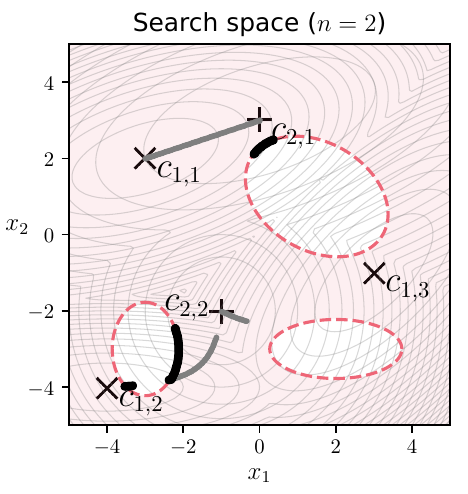}&\includegraphics[width=0.25\linewidth, trim={20pt 10pt 260pt 5pt}, clip]{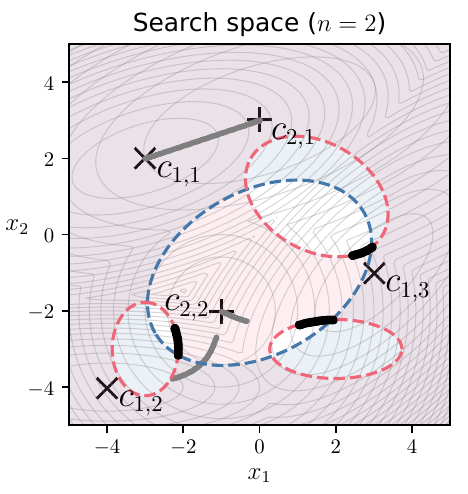}&\includegraphics[width=0.25\linewidth, trim={20pt 10pt 260pt 5pt}, clip]{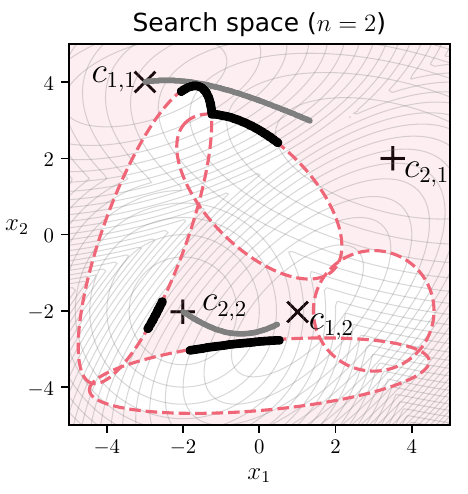}&\includegraphics[width=0.25\linewidth, trim={20pt 10pt 260pt 5pt}, clip]{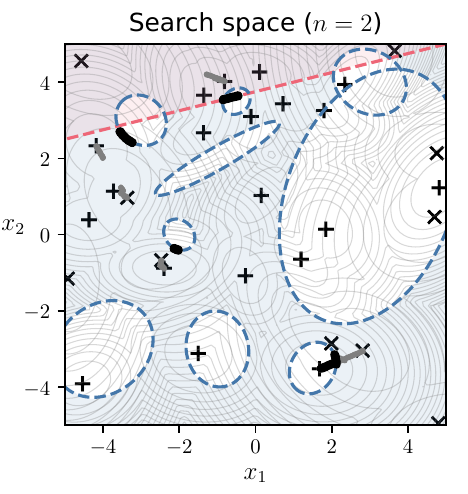}\\
    (a)&(b)&(c)&(d)\\[-10pt]
    \end{tabular}
    \caption{Examples of problems with multimodal objectives and constraints. The legends (and the annotation of peaks' optima in (d)) are removed for better visibility. Colors denote the constraints, while gray and black points show the unconstrained Pareto set and the Pareto set, respectively.}
    \label{fig:multi-multi}
\end{figure}

\subsection{Different Pareto front shapes}\label{sec:diff-shape}
The proposed test function generator also allows to easily create different Pareto front shapes via strictly increasing transformations of objective functions. Figure~\ref{fig:varying-alpha} shows the effect of transformations $\Phi_i^\alpha: \R \rightarrow \R$ with $\Phi_i^\alpha(x) = x^{\alpha_i}$ for varying $\alpha$ applied to the objective functions in the three leftmost plots from Figure~\ref{fig:single-2-and-3}---an effect that has been exploited recently to define so-called ``L-shaped'' Pareto fronts when $\alpha_i=\alpha_j$ for all $i,j$ \cite{kenny2025multi}.

\begin{figure}[t]
    \centering
    \includegraphics[height=0.25\textwidth, trim={65pt 10pt 100pt 5pt}, clip]{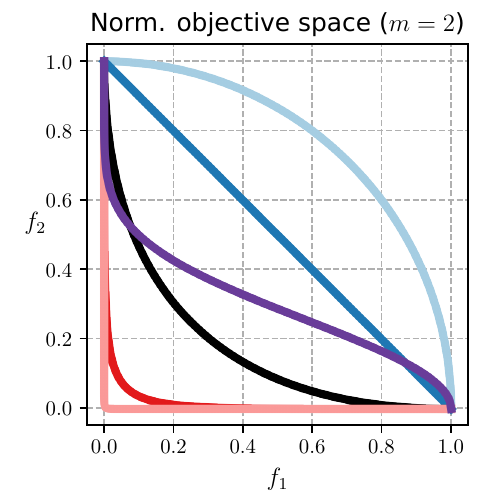}%
    \includegraphics[height=0.25\textwidth, trim={65pt 10pt 100pt 5pt}, clip]{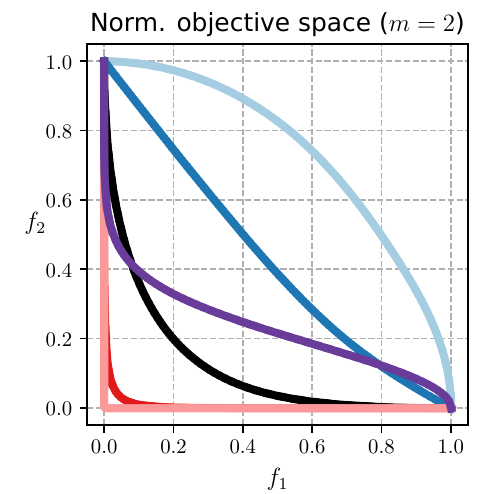}%
    \includegraphics[height=0.25\textwidth, trim={65pt 10pt 100pt 5pt}, clip]{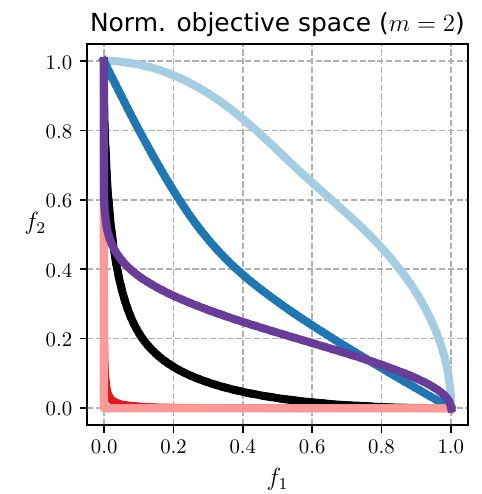}%
    \includegraphics[height=0.25\textwidth, trim={300pt 50pt 5pt 50pt}, clip]{single3-alphas.pdf}
    \caption{Example of some Pareto front shapes achievable with the proposed problem generator (note that the objective space is normalized). To this end, the objective function values $f_1(x)$ and $f_2(x)$ of the three leftmost problems from Figure~\ref{fig:single-2-and-3} are transformed into $\left(f_1(x)\right)^{\alpha_1}$ and $\left( f_2(x)\right)^{\alpha_2}$. Values for $\alpha=(\alpha_1, \alpha_2)$ are encoded in color and given in the legend on the right.}
    \label{fig:varying-alpha}
\end{figure}

We observe that the original convex Pareto front shape (in black) can be transformed into fronts with even stronger curvature (red fronts), but also into concave Pareto fronts (some of the blue fronts) and Pareto fronts which are neither convex nor concave (in violet, e.g., in Figure~\ref{fig:varying-alpha} when $\alpha_1 > 1 > \alpha_2$)---all without changing the original Pareto set. 

While defining test functions with desired Pareto front shapes is not new, the ``classical'' test problems like DTLZ~\cite{deb2005scalable} and WFG~\cite{huband2005scalable} do not use one final transformation on each objective function, but utilize specific shape functions that are applied to a subset of the variables (typically for a single objective only). Note that the transformations, for example in the WFG problems, are not necessarily strictly increasing and thus, their influence on the resulting Pareto front is harder to predict.

\section{Using the \COBI test problem generator for performance evaluation}\label{sec:perfassessment}
The primary purpose of a test function is to support the design  and benchmarking of optimization algorithms. The knowledge of the Pareto set (or a numerical approximation of it with arbitrary precision) for the proposed test functions allows to record and visualize algorithm performance under various scenarios. It is worth mentioning in this context that several performance metrics from the literature naturally depend on the knowledge of a (finite) reference set \cite{ishibuchi2015modified}.

Here, due to space restrictions, we show results only for the impact of problem dimension on a single algorithm in terms of convergence to the Pareto set and its hypervolume indicator value \cite{guerreiro2021hypervolume}. Similarly, other influences on performance can be investigated as well such as when changing the number of constraints, the function modality (i.e., the number of peaks), the convexity/concavity of the problem transformations, the curvature of the constraints etc. As the example algorithm, we use the well-known NSGA-II \cite{deb2002nsgaii} with default setup and a population size of 100 as implemented in the {\ttfamily pymoo} module \cite{pymoo}.
 
Figure~\ref{fig:changing-dimension} shows the effect of search space dimension scaling on NSGA-II for a single-peak problem with one linear and one convex-quadratic constraint. The search space dimension is increased from $n=2$ more or less logarithmically up to $n=40$. To avoid unfair comparisons across dimensions due to a different number of solutions in the Pareto set approximation, we approximated each Pareto set with the numerical procedure from Section~\ref{subsec:singlepeak} using a dimension-dependent precision $\epsilon$, which resulted in $20068 \pm 86$ solutions for all values of $n$. 
%
%
The leftmost plot of Figure~\ref{fig:changing-dimension} shows the hypervolume differences between the Pareto set approximations and the NSGA-II population over time, averaged over 15 independent algorithm runs for each dimension.\footnote{The hypervolume's reference point is chosen as the nadir point.} The other plots of Figure~\ref{fig:changing-dimension} show the projection of all feasible non-dominated solutions found by NSGA-II in the median run to the $x_1$--$x_2$ plane in comparison to the Pareto set approximation in dimensions $n\in\{2, 10, 40\}$. 

We observe that, with increasing dimension, the performance of NSGA-II deteriorates compared to the hypervolume of the Pareto set approximations.
In the search space projection plots, we also see that NSGA-II has difficulty finding the extremes of the Pareto set---an observation that cannot be made from hypervolume values alone.

\begin{figure}[t]
    \centering
    \includegraphics[width=0.25\linewidth, trim={10pt 5pt 7pt 5pt}, clip]{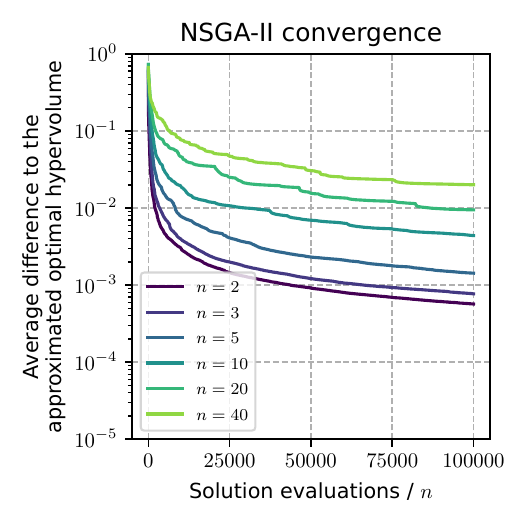}\includegraphics[width=0.25\linewidth, trim={20pt 10pt 255pt 5pt}, clip]{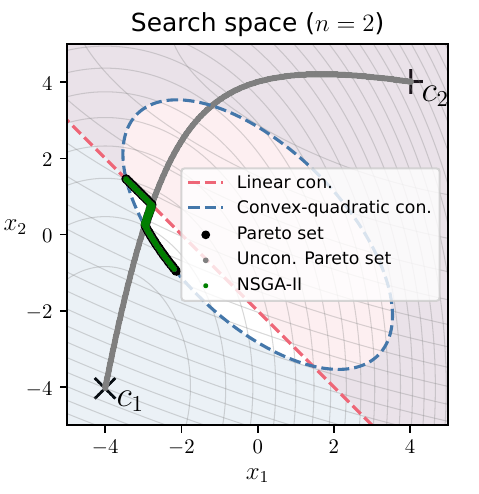}\includegraphics[width=0.25\linewidth, trim={20pt 10pt 255pt 5pt}, clip]{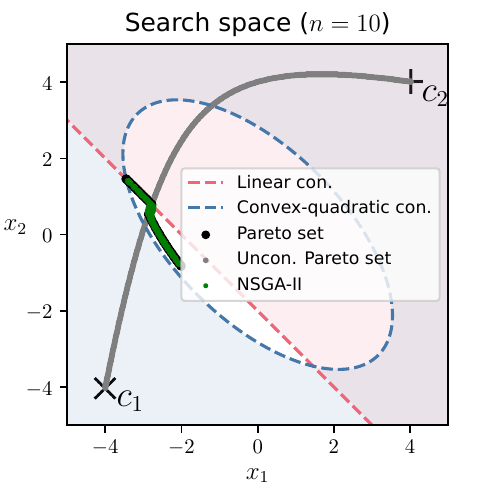}\includegraphics[width=0.25\linewidth, trim={20pt 10pt 255pt 5pt}, clip]{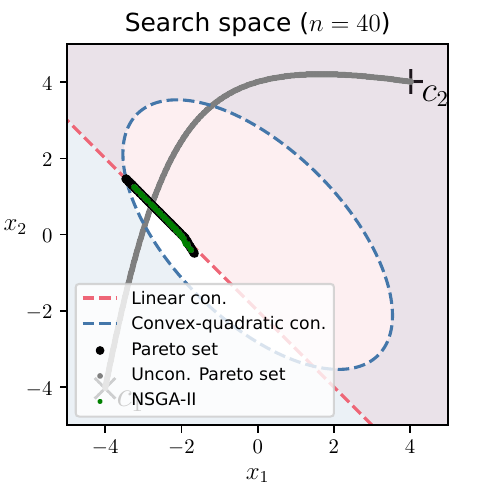}\\[-0.5em]
    \caption{The effect of search space dimension scaling on NSGA-II performance on a single-peak problem with one linear and one convex-quadratic constraint. The plot on the left presents the difference between the hypervolume achieved by NSGA-II (averaged over 15 algorithm runs) and the approximated optimal hypervolume. 
    The other three plots show, from left to right, the projections to the $x_1$--$x_2$ plane of the Pareto set approximation and all non-dominated solutions found by the median run of NSGA-II for search space dimensions $n=2, 10$ and $40$, respectively. They visualize the same effect in the problem search space.}
    \label{fig:changing-dimension}
\end{figure}


\bibliographystyle{plain}
\bibliography{bibtex}

\end{document}